\newtheorem{proposition}{Proposition}
\newif\ifisThesis
\DeclareMathOperator{\chol}{chol}
\newcommand{\algPotrf}{\texttt{potrf} \hspace{1pt} $\frac{n^3}{3}$}
\newcommand{\algTrsm}{\texttt{trsm} \hspace{2.5mm} $n^3$}
\newcommand{\algSyrk}{ \texttt{syrk} \hspace{2.5mm} $n^3$}
\newcommand{\algGemm}{\texttt{gemm} \hspace{0.7mm} $2n^3$}
\title{GPU-Accelerated Cholesky Factorization of Block Tridiagonal Matrices}
\author{Roland Schwan \and Daniel Kuhn \and Colin N. Jones%
\thanks{Large language models were used to improve the writing quality in the manuscript, including grammar, clarity, and flow.
}%
\thanks{This work was supported as a part of NCCR Automation, a National Centre of Competence in Research, funded by the Swiss National Science Foundation (grant number 51NF40\_225155).
}%
\thanks{Roland Schwan and Colin N. Jones are with the Automatic Control Lab, EPFL, Switzerland. Roland Schwan and Daniel Kuhn are with the Risk Analytics and Optimization Chair, EPFL, Switzerland. \tt{\{roland.schwan, daniel.kuhn, colin.jones\}@epfl.ch}}%
}
\date{}
\begin{document}

\makeatletter
\let\@fnsymbol@orig\@fnsymbol
\def\@fnsymbol#1{}
\makeatother

\maketitle

\makeatletter
\let\@fnsymbol\@fnsymbol@orig
\makeatother

\begin{abstract}
This paper presents a GPU-accelerated framework for solving block tridiagonal linear systems that arise naturally in numerous real-time applications across engineering and scientific computing. Through a multi-stage permutation strategy based on nested dissection, we reduce the computational complexity from $\mathcal{O}(Nn^3)$ for sequential Cholesky factorization to $\mathcal{O}(\log_2(N)n^3)$ when sufficient parallel resources are available, where $n$ is the block size and $N$ is the number of blocks. The algorithm is implemented using NVIDIA's Warp library and CUDA to exploit parallelism at multiple levels within the factorization algorithm. Our implementation achieves speedups exceeding 100x compared to the sparse solver QDLDL, 25x compared to a highly optimized CPU implementation using BLASFEO, and more than 2x compared to NVIDIA's CUDSS library. The logarithmic scaling with horizon length makes this approach particularly attractive for long-horizon problems in real-time applications. Comprehensive numerical experiments on NVIDIA GPUs demonstrate the practical effectiveness across different problem sizes and precisions. The framework provides a foundation for GPU-accelerated optimization solvers in robotics, autonomous systems, and other domains requiring repeated solution of structured linear systems. The implementation is open-source and available at \url{https://github.com/PREDICT-EPFL/socu}.
\end{abstract}

\section{Introduction}

Block tridiagonal linear systems arise naturally in numerous real-time applications across engineering and scientific computing. In model predictive control (MPC), trajectory optimization, and Kalman filtering, the temporal structure of the underlying dynamics leads to block tridiagonal systems that must be solved repeatedly at high frequencies. Examples span diverse domains, including legged robotics \cite{amatucci2025}, Formula One race car control \cite{perantoni2014}, train trajectory optimization \cite{kouzoupis2023}, and spacecraft trajectory planning \cite{wang2018}. These applications increasingly demand long prediction horizons to improve performance and handle complex constraints, yet despite the inherent sparsity of these systems, sequential solvers remain a computational bottleneck, limiting the achievable sampling rates and horizon lengths in real-time applications.

This \ifisThesis chapter \else paper \fi presents a GPU-accelerated Cholesky factorization framework specifically designed for block tridiagonal matrices. Unlike existing approaches that modify the optimization algorithm itself, we introduce parallelization directly at the linear algebra level. For problems with block size $n$ and horizon $N$, through a nested dissection permutation strategy, we reduce the computational complexity from $\mathcal{O}(Nn^3)$ for sequential factorization to $\mathcal{O}(\log_2(N)n^3)$ when sufficient parallel resources are available. This logarithmic scaling is particularly advantageous for long-horizon problems, where the performance gap compared to sequential methods grows substantially with increasing horizons. The approach preserves the numerical properties and robustness of the underlying solver, making it broadly applicable across various optimization frameworks.

Existing approaches to parallelizing block tridiagonal solvers face significant limitations. Sequential structure-exploiting solvers achieve optimal floating-point operation counts but are fundamentally constrained by $\mathcal{O}(Nn^3)$ complexity due to inherent data dependencies \cite{schwan2025}. General parallel sparse factorization methods \cite{heath1991} do not fully exploit the regular block structure, while partition-based parallelization strategies \cite{cao2002} achieve only modest speedups due to substantial sequential overhead.

Parallelization techniques for model predictive control have been explored previously, achieving similar $\mathcal{O}(\log_2(N)n^3)$ complexity \cite{nielsen2014, nielsen2015, sarkka2023}, with applications to robotics \cite{jallet2024, amatucci2025}. However, these methods are extremely tailored to MPC and do not accommodate more general problem structures. They decompose the optimization problem itself, tying the approach to the specific structure of MPC. In contrast, our approach follows the ideas from our previous work \cite{schwan2025} and parallelizes directly at the linear algebra level. This results in reduced overhead and enables future extensions to more general structures.

The methods presented in this paper are directly applicable to the growing interest in deploying optimization algorithms for robotics applications. Recent years have seen increasing interest in GPU-based solvers that solve the underlying linear systems using preconditioned conjugate gradient (PCG) methods~\cite{adabag2024, yang2025, adabag2025}. While these iterative methods offer excellent parallelizability, their convergence depends critically on the system's conditioning. Consequently, accuracy is typically limited, and the required number of iterations grows with problem dimensionality.

In recent years, combining model predictive control with reinforcement learning (RL) has gained popularity in robotics applications \cite{jenelten2024, jeon2025}; a detailed survey can be found in \cite{reiter2025}. While this combination can drastically reduce the number of samples required for learning, it comes at the cost of increased per-sample computational expense. This is primarily due to inefficient solving of the underlying optimization problem, which often remains slow. In contrast, purely neural network-based policies are fully batched and executed on GPUs. In this work, we aim to close this gap by developing tools that leverage GPU acceleration to make MPC competitive for RL applications.

The main contributions of this \ifisThesis chapter \else paper \fi are: (1) a multi-stage permutation strategy based on nested dissection that achieves logarithmic complexity on GPUs, (2) operation-level parallelism through CUDA streams and atomic operations that further reduces the critical path length, (3) fused and blocked kernel implementations optimized for different problem sizes and memory constraints, and (4) comprehensive numerical experiments demonstrating speedups of 100x--500x compared to the sparse solver QDLDL, 25x--40x compared to a highly optimized CPU implementation using the BLASFEO library, and more then 2x compared to the closed-source CUDSS library by NVIDIA.

The remainder of this \ifisThesis chapter \else paper \fi is organized as follows. Section~\ref{sec:block_cholesky} reviews three variants of block Cholesky factorization and their memory access patterns. Section~\ref{sec:sequential} presents the sequential baseline algorithm and analyzes its computational complexity. Section~\ref{sec:parallel} develops our parallelization strategies, progressing from partition-based approaches to single-stage and finally multi-stage permutations. Section~\ref{sec:numerical_implementation_gpu} details the GPU implementation, including the CUDA programming model, kernel fusion strategies, and memory optimization techniques. Section~\ref{sec:numerical_examples_gpu} presents comprehensive numerical experiments comparing our approach against state-of-the-art CPU and GPU-based solvers across various problem sizes and precisions. Finally, Section~\ref{sec:conclusion} concludes with a summary and discussion of future research directions.

\section{Block Cholesky Factorization} \label{sec:block_cholesky}

Consider a positive-definite block matrix $A \in \mathbb{R}^{Nn \times Nn}$ with $N \times N$ blocks of size $n\times n$, where each block is indexed as $A_{ij}$. The dense Cholesky factorization fundamentally consists of a sequence of Gaussian elimination steps based on the elementary operation
\begin{equation*}  
    A_{ij} \leftarrow A_{ij} - \left( A_{ik} A_{jk}^\top \right) A_{kk}^{-\top}  
\end{equation*}  
The loop indices $i$, $j$, and $k$ can be nested in any order, yielding different memory access patterns and giving rise to three distinct Cholesky factorization algorithms illustrated in Figure~\ref{fig:cholesky_access_viz}:
\begin{enumerate}  
  \item \textbf{Submatrix-Cholesky} (Right-Looking): Columns are computed sequentially in the outer loop, with the right-looking submatrix updated in the inner loop. This approach is also known as column \textit{immediate-update} or \textit{fan-out}.  
  \item \textbf{Column-Cholesky} (Left-Looking): Columns are computed sequentially, but each column is updated in a delayed fashion using previously computed columns to the left. This method is also referred to as column \textit{delayed-update} or \textit{fan-in}.  
  \item \textbf{Row-Cholesky} (Up-Looking): Rows are computed sequentially, with updates based on previously computed rows above. This algorithm is also known as row \textit{delayed-update} or \textit{fan-in}.  
\end{enumerate}  
Each algorithm offers distinct advantages and disadvantages regarding memory access patterns and corresponding optimized data structures, particularly in sparse matrix scenarios. Further details can be found in \cite{heath1991}. The algorithms developed in subsequent sections leverage a combination of submatrix- and column-Cholesky approaches to achieve parallelization while avoiding race conditions.

\begin{figure}[ht]
\centering
\includegraphics[width=1.0\textwidth]{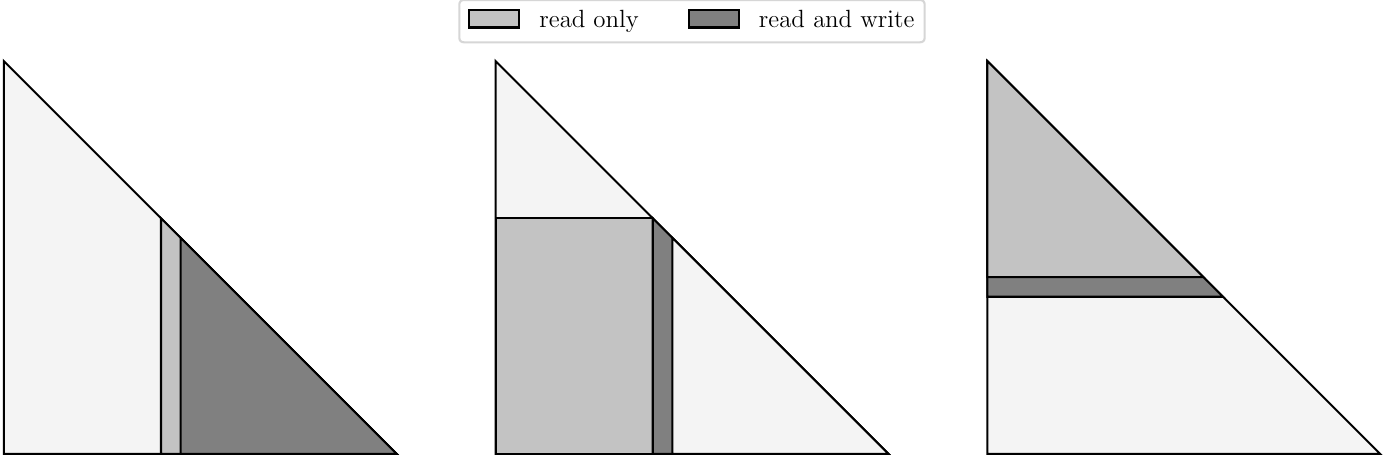}
\begin{minipage}[t]{0.32\textwidth}
\vspace{-5mm}
\begin{algorithm}[H]
\caption*{Submatrix-Cholesky}
\label{alg:submatrix_cholesky}
\begin{algorithmic}[1]
\For{$k = 1$ to $n$}
    \State $A_{kk} \leftarrow \text{chol}(A_{kk})$
    \For{$i = k + 1$ to $n$}
        \State $A_{ik} \leftarrow A_{ik} A_{kk}^{-T}$
    \EndFor
    \For{$j = k + 1$ to $n$}
        \For{$i = j$ to $n$}
            \State \hspace{-3mm}$A_{ij} \mathrel{-}\mkern-1mu= A_{ik} A_{jk}^T$
        \EndFor
    \EndFor
\EndFor
\end{algorithmic}
\end{algorithm}
\end{minipage}
\hfill
\begin{minipage}[t]{0.32\textwidth}
\vspace{-5mm}
\begin{algorithm}[H]
\caption*{Column-Cholesky}
\label{alg:column_cholesky}
\begin{algorithmic}[1]
\For{$k = 1$ to $n$}
    \For{$j = 1$ to $k - 1$}
        \For{$i = k$ to $n$}
            \State \hspace{-3mm}$A_{ik} \mathrel{-}\mkern-1mu= A_{ij} A_{kj}^T$
        \EndFor
    \EndFor
    \State $A_{kk} \leftarrow \text{chol}(A_{kk})$
    \For{$i = k + 1$ to $n$}
        \State $A_{ik} \leftarrow A_{ik} A_{kk}^{-T}$
    \EndFor
\EndFor
\end{algorithmic}
\end{algorithm}
\end{minipage}
\hfill
\begin{minipage}[t]{0.32\textwidth}
\vspace{-5mm}
\begin{algorithm}[H]
\caption*{Row-Cholesky}
\label{alg:row_cholesky}
\begin{algorithmic}[1]
\For{$i = 1$ to $n$}
    \For{$j = 1$ to $i$}
        \For{$\!k\!=\!\!1\!$ to $\!j\!-\!1$}
            \State \hspace{-3mm}$A_{ij} \mathrel{-}\mkern-1mu= A_{ik} A_{jk}^T$
        \EndFor
        \If{$i = j$}
        \State $\!\!\!\!\!A_{jj} \!\leftarrow\! \text{chol}(A_{jj})$
        \Else
            \State $\!\!\!\!\!A_{ij} \!\leftarrow\! A_{ij} A_{jj}^{-T}$
        \EndIf
    \EndFor
\EndFor
\end{algorithmic}
\end{algorithm}
\end{minipage}
\caption{Three Cholesky factorization variants and their memory access patterns.}
\label{fig:cholesky_access_viz}
\end{figure}

\section{Sequential Cholesky Factorization} \label{sec:sequential}

We consider symmetric positive definite block tridiagonal matrices of the form
\begin{equation*}
\Psi = \begin{bmatrix}
D_1 & E_1^\top & & & \\[1ex]
E_1 & D_2 & E_2^\top & & \\
& E_2 & \ddots & \ddots & \\
& & \ddots & D_{N-1} & E_{N-1}^\top \\[1ex]
& & & E_{N-1} & D_N
\end{bmatrix}
\end{equation*}
where $D_i \in \mathbb{R}^{n \times n}$ are symmetric diagonal blocks and $E_i \in \mathbb{R}^{n \times n}$ are off-diagonal blocks. The Cholesky factorization $\Psi = LL^\top$ preserves the block tridiagonal structure, yielding  
\begin{equation*}
L = \begin{bmatrix}
\hat{D}_1 & & & & \\[1ex]
\hat{E}_1 & \hat{D}_2 & & & \\
& \hat{E}_2 & \ddots & & \\
& & \ddots & \hat{D}_{N-1} & \\[1ex]
& & & \hat{E}_{N-1} & \hat{D}_N
\end{bmatrix}
\end{equation*}
where $\hat{D}_i$ are lower triangular diagonal blocks and $\hat{E}_i$ are the corresponding factorized off-diagonal blocks.

\begin{algorithm}[t]
\caption{Sequential factorization of $\Psi$}
\label{alg:seq_factorization}
\begin{algorithmic}[1]
\State $\hat{D}_1 \leftarrow \chol(D_1)$ \hfill \algPotrf
\For{$i = 2,\ldots,N$}
    \State $\hat{E}_{i-1} \leftarrow E_{i-1} \hat{D}_{i-1}^{-\top}$ \hfill \algTrsm
    \State $\hat{D}_i \leftarrow D_i - \hat{E}_{i-1} \hat{E}_{i-1}^\top$ \hfill \algSyrk
    \State $\hat{D}_i \leftarrow \chol(\hat{D}_i)$ \hfill \algPotrf
\EndFor
\end{algorithmic}
\end{algorithm}

Algorithm~\ref{alg:seq_factorization} presents the standard left-looking sequential factorization. The algorithm begins by factorizing the first diagonal block, then processes each subsequent block row by solving for the off-diagonal block via triangular solve (\texttt{trsm}), updating the diagonal block with a symmetric rank-$k$ update (\texttt{syrk}), and computing its Cholesky factorization (\texttt{potrf}).

This algorithm achieves optimal floating-point operation count by preserving the sparse structure without introducing fill-in elements. However, it suffers from inherent sequential dependencies between consecutive blocks. Each block $i$ cannot begin processing until block $i-1$ is fully factorized, as the \texttt{trsm} operation requires the completed factorized diagonal block $\hat{D}_{i-1}$ from the preceding iteration.

\subsection{Computational Complexity}

\begin{table}
\centering
\caption{Cost of elementary matrix operations.}
\begin{tabular}{lll}
\multicolumn{3}{c}{$A : m \times n, B : n \times p, D, L : n \times n, L$ lower triangular.} \\
\toprule
Operation & Kernel & Cost (flops) \\
\midrule
Matrix matrix multiplication $A \cdot B$ & \texttt{gemm} & $2mnp$ \\
Symmetric matrix multiplication $A \cdot A^T$ & \texttt{syrk} & $m^2n$ \\
Cholesky decomposition s.t. $LL^T = D$ & \texttt{potrf} & $1/3n^3$ \\
Solving triangular matrix $A \cdot L^{-T}$ & \texttt{trsm} & $mn^2$ \\
\bottomrule
\end{tabular}
\label{table:elem_flops_gpu}
\end{table}

Using the operation costs from Table~\ref{table:elem_flops_gpu}, each iteration of the sequential algorithm requires one \texttt{trsm}, \texttt{syrk}, and \texttt{potrf} operation on $n \times n$ blocks, totaling $\frac{7n^3}{3}$ flops per block. Since the first block requires only the initial Cholesky factorization costing $\frac{n^3}{3}$ flops, the total computational cost becomes
\begin{equation*}
    \left(\frac{7}{3}N - 2 \right)n^3
\end{equation*}
flops. This results in $\mathcal{O}(Nn^3)$ complexity that scales linearly with the number of blocks $N$. However, the sequential dependencies preclude parallelization across blocks, which motivates the matrix reordering strategies developed in subsequent sections to expose parallelism within the factorization process.

\section{Parallelization Strategies} \label{sec:parallel}

The sequential dependency in Algorithm~\ref{alg:seq_factorization} arises from the natural ordering of blocks, where each block must wait for the previous one to complete. However, this dependency can be relaxed by reordering the matrix blocks through carefully chosen permutations. Applying a permutation matrix $P$, we can transform the original problem $\Psi = LL^\top$ into $P\Psi P^\top = \hat{L}\hat{L}^\top$, where the permuted matrix exposes opportunities for parallel computation.

The key insight is to partition blocks into independent sets that can be processed concurrently while preserving the mathematical correctness of the factorization. We present two distinct permutation approaches that offer different computational trade-offs. The first approach targets scenarios with limited parallel units and potentially high synchronization overhead, typical of classical CPUs and distributed computing clusters. We then extend this framework to accommodate devices with massive parallel processing capabilities and low-to-moderate synchronization costs, characteristic of modern GPUs.

\subsection{Partition Permutation} \label{sec:partition_permutation}

The first approach, adapted from \cite{cao2002}, \textit{partitions} the matrix $\Psi$ as follows:
\begin{equation*}
    \Psi = \left[
    \begin{array}{c|c|c|c|c|c|c}
        \begin{matrix}
        D_{11} & \star &  \\
        E_{11} & \ddots & \ddots \\
         & \ddots & D_{N_{1}1} \\
        \end{matrix} & \begin{matrix}\\[0.9cm] \star\end{matrix} & & & & & \\
    \hline
    \hspace{2cm} F_1 & A_2 & \star\hspace{2cm} & & & & \\
    \hline
    & \begin{matrix}
        B_2 \\[1.3cm]
    \end{matrix} & \begin{matrix}
        D_{12} & \star & \\
        E_{12} & \ddots & \ddots \\
         & \ddots & D_{N_{2}2} \\
        \end{matrix} & \begin{matrix}\\[0.9cm] \star\end{matrix} & & & \\
    \hline
    & & \hspace{2cm}F_2 & A_3 & \star\hspace{1.5cm} & & \\
    \hline
    & & & \begin{matrix}
        B_3 \\[1.3cm]
    \end{matrix} & \ddots & \begin{matrix}\\[0.5cm] \star\end{matrix} &  \\
    \hline
    & & & & & \begin{matrix}
        B_p \\[1.3cm]
    \end{matrix} & \begin{matrix}
        D_{1p} & \star & \\
        E_{1p} & \ddots & \ddots  \\
         &  \ddots & D_{N_{p}p}
    \end{matrix}
    \end{array}
    \right],
\end{equation*}
where $p-1$ pivots partition the matrix into $p$ chunks that can be processed in parallel across $p$ threads. A permutation $P_p$ moves the pivots to the end, yielding the permuted matrix
\begin{equation*}
\resizebox{1.0\hsize}{!}{$
    \Psi_p = \left[
    \begin{array}{c|c|c|c||c|c|c|c}
        \begin{matrix}
        D_{11} & \star &  \\
        E_{11} & \ddots & \ddots \\
         & \ddots & D_{N_{1}1} \\
        \end{matrix} & & & & & & & \\
    \hline
     & \begin{matrix}
        D_{12} & \star & \\
        E_{12} & \ddots & \ddots \\
         & \ddots & D_{N_{2}2} \\
        \end{matrix} & & & & & & \\
    \hline
    & & \ddots & & & & & \\
    \hline
    & & & \begin{matrix}
        D_{1p} & \star & \\
        E_{1p} & \ddots & \ddots  \\
         &  \ddots & D_{N_{p}p}
    \end{matrix} & & & & \\
    \hline\hline
    \hspace{2cm} F_{1}\rule{0pt}{2.2ex} & \begin{matrix}B_{2}^{\top} & \quad\quad\quad\quad\;\ \end{matrix} & & & A_{2} & & & \\
    \hline
     & \hspace{2cm} F_2\rule{0pt}{2.2ex} & \begin{matrix}B_{3}^{\top} & \quad\quad\quad \end{matrix} & & & A_{3} & & \\
     \hline
     & & \ddots &  & & & \ddots & \\
     \hline
     & & \hspace{1cm} F_{p-1}\rule{0pt}{2.3ex} & \begin{matrix}B_{p}^\top & \quad\quad\quad\quad\quad \end{matrix} & & & & A_{p}
    \end{array}
    \right].
$}
\end{equation*}
This breaks the sequential dependency in the Cholesky factorization by decoupling the columns of individual partitions through pivot relocation. The resulting Cholesky factors $L_p$ are given by
\begin{equation*}
\resizebox{1.0\hsize}{!}{$
    L_p =
    \left[
    \begin{array}{c|c|c|c||c|c|c|c}
        \begin{matrix}
        \hat{D}_{11} &  &  \\
        \hat{E}_{11} & \ddots &  \\
         & \ddots & \hat{D}_{N_{1}1} \\
        \end{matrix} & & & & & & & \\
    \hline
     & \begin{matrix}
        \hat{D}_{12} &  & \\
        \hat{E}_{12} & \ddots & \\
         & \ddots & \hat{D}_{N_{2}2} \\
        \end{matrix} & & & & & & \\
    \hline
    & & \ddots & & & & & \\
    \hline
    & & & \begin{matrix}
        \hat{D}_{1p} &  & \\
        \hat{E}_{1p} & \ddots &  \\
         &  \ddots & \hat{D}_{N_{p}p}
    \end{matrix} & & & & \\
    \hline \hline
    \hspace{2cm} \hat{F}_{1} & \hat{B}_{12}^\top \hspace{0.3cm} \dots \hspace{0.3cm} \hat{B}_{N_22}^\top & & & \hat{A}_{2} & & & \\
    \hline
     & \hspace{2cm} \hat{F}_2 & \hat{B}_{13}^\top \hspace{0.3cm} \dots \hspace{0.3cm} \hat{B}_{N_33}^\top & & \hat{H}_2 & \hat{A}_{3} & & \\
     \hline
     & & \ddots &  & & \ddots & \ddots & \\
     \hline
     & & \hspace{1.5cm} \hat{F}_{p-1} & \hat{B}_{1p}^\top \hspace{0.3cm} \dots \hspace{0.3cm} \hat{B}_{N_pp}^\top & & & \hat{H}_{p-1} & \hat{A}_{p}
    \end{array}
    \right].
$}
\end{equation*}
Note the additional fill-in entries $\hat{B}_{ik}$ and $\hat{H_k}$ for $i=2,\dots,N_k$ and $k=2,\dots,p$, which arise from the permutation. While fill-in increases the total computational cost, this approach enables parallelization that can outweigh the additional operations.

Algorithm~\ref{alg:factorization_partition} presents the detailed factorization procedure for $\Psi_p$, which operates in two distinct phases: a parallel phase followed by a sequential phase. By moving the pivots to the matrix end, we eliminate dependencies and enable parallelization of each partition, after which a sequential phase proceeds that closely resembles Algorithm~\ref{alg:seq_factorization}.

To illustrate the approach, consider the factorization of the second partition of $\Psi_p$. The factorization of the block tridiagonal submatrix containing elements $D_{12}, \dots$ and $E_{12}, \dots$ follows the sequential Algorithm~\ref{alg:seq_factorization} and corresponds to Lines 5 to 7 in Algorithm~\ref{alg:factorization_partition}. The key difference from the sequential algorithm is that we must also handle the fill-in arising from the pivots, as shown in Lines 8 to 12. Specifically, the introduction of the permutation $B_2^\top$ and $E_{12}$ generates the fill-in $B_{22}^\top$, which then propagates recursively to produce fill-in across the entire row, i.e., $B_{12}^\top, \dots, B_{N_22}^\top$.

We now address the update of the pivots $\hat{A}_i$ and their associated fill-in $\hat{H}_i$. Algorithm~\ref{alg:factorization_partition} is carefully designed to prevent race conditions in these updates. For instance, $\hat{F_i}$ and $\hat{B}_{i,k+1}$ cannot simultaneously update $\hat{A}_{i+1}$ for $i=1,\dots,p-1$, as they are handled by different threads. To address this, we defer the $\hat{F_i}$ update to the sequential phase at Line~23, which constitutes a left-looking operation. Consequently, Algorithm~\ref{alg:factorization_partition} represents a hybrid left-/right-looking approach.

Analyzing Algorithm~\ref{alg:factorization_partition} in detail reveals the following computational costs for each phase:
\begin{center}
\begin{tabular}{lccc}
\toprule
 & Parallel phase ($k=1$) & Parallel phase ($k>1$) & Sequential phase \\
\midrule
Cost (flops) & $\left( \frac{7}{3}N_1 - 1 \right)n^3$ & $\left( \frac{19}{3}N_k - 1 \right)n^3$ & $\left( \frac{10}{3}p - \frac{16}{3} \right)n^3$ \\
\bottomrule
\end{tabular}
\end{center}
The first partition exhibits significantly lower cost per horizon element $N_1$ compared to other partitions, leading to the following optimal allocation strategy:
\begin{proposition}
    For a problem with total $N$ diagonal elements, the optimal ratio between $N_1$ and $N_k$ for $k>1$ is given by
    \begin{equation*}
        \frac{N^\star_1}{N^\star_k} = \frac{19}{7} \approx 2.71,
    \end{equation*}
    with
    \begin{equation*}
        N^\star_1 = \frac{19N-19p+19}{7p+12} \text{ and } N^\star_k = \frac{7N-7p+7}{7p+12}.
    \end{equation*}
\end{proposition}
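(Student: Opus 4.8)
The plan is to read the statement as a continuous load-balancing problem for the critical path of Algorithm~\ref{alg:factorization_partition}, with the number of partitions $p$ held fixed. First I would record the bookkeeping constraint: the $p-1$ pivot blocks $A_2,\dots,A_p$ together with the $p$ block-tridiagonal partitions exhaust all $N$ diagonal blocks, so $\sum_{k=1}^{p} N_k = N - p + 1$. Next, assuming one thread per partition (the ``sufficient parallel resources'' regime), the $p$ partition factorizations run concurrently and only afterwards does the sequential phase execute, so the wall-clock cost is
\[
  T(N_1,\dots,N_p) \;=\; \max_{1\le k\le p} C_k \;+\; \Bigl(\tfrac{10}{3}p - \tfrac{16}{3}\Bigr)n^3,
\]
where, from the per-phase cost table, $C_1 = \bigl(\tfrac{7}{3}N_1 - 1\bigr)n^3$ and $C_k = \bigl(\tfrac{19}{3}N_k - 1\bigr)n^3$ for $k>1$. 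The sequential-phase term depends only on $p$, so minimizing $T$ over $\{N\ge 0:\sum_k N_k = N-p+1\}$ (relaxing integrality of the $N_k$) is equivalent to minimizing $\max_k C_k$.

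I would then do the minimization in two steps. Step one: for a fixed $N_1$, the leftover budget $S:=(N-p+1)-N_1$ is distributed among the $p-1$ partitions with $k>1$; since each $C_k$ is strictly increasing in $N_k$, the largest part of any uneven split strictly exceeds the even value, so $\max_{k>1}C_k$ is minimized exactly by $N_2=\dots=N_p=S/(p-1)$. Hence at any optimum we may take $N_2=\dots=N_p=:N_k$ with $N_1+(p-1)N_k=N-p+1$, and the objective collapses to a one-variable function
\[
  \phi(N_1) \;=\; \max\Bigl\{\, \bigl(\tfrac{7}{3}N_1 - 1\bigr)n^3,\ \bigl(\tfrac{19}{3}\cdot\tfrac{N-p+1-N_1}{p-1} - 1\bigr)n^3 \,\Bigr\}.
\]
Step two: the first argument is strictly increasing in $N_1$ and the second strictly decreasing; checking the endpoints $N_1=0$ and $N_1=N-p+1$ shows the second term dominates at the left end and the first at the right end (using $N-p+1>0$), so $\phi$ falls then rises and is minimized precisely at the unique crossing point where the two arguments coincide. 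Equating them, the common constant $-1$ cancels, leaving $\tfrac{7}{3}N_1^\star=\tfrac{19}{3}N_k^\star$, i.e.\ $N_1^\star/N_k^\star=19/7$. Substituting into $N_1^\star+(p-1)N_k^\star=N-p+1$ gives $N_k^\star=\tfrac{7(N-p+1)}{7p+12}$ and $N_1^\star=\tfrac{19(N-p+1)}{7p+12}$; since $0<N_1^\star<N-p+1$ whenever $N\ge p$, the crossing point is feasible and is therefore the global minimizer.

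The only non-routine content is the load-balancing argument, namely (i) that for a fixed total the worst-case $C_k$ over $k>1$ is minimized by equalizing the $N_k$, and (ii) that the maximum of an increasing and a decreasing function is minimized at their intersection — both of which follow from the monotonicity of $C_k$ in $N_k$ and a short endpoint check; everything else is algebra on the constraint. The one caveat I would flag is the integrality relaxation: in practice $N_1^\star,N_k^\star$ must be rounded to nearby integers, which perturbs the critical path only by $\mathcal{O}(n^3)$ and leaves the asymptotic ratio $19/7$ unchanged.
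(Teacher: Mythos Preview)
Your argument is correct and follows essentially the same approach as the paper: equalize the per-thread parallel-phase costs and solve the resulting linear system together with the node-count constraint $N_1+(p-1)N_k=N-(p-1)$. The paper's proof simply asserts the load-balancing condition and solves, whereas you supply the justification (equalizing the $N_k$ for $k>1$ and then minimizing the max of an increasing and a decreasing function at their crossing), so your version is a strictly more complete rendering of the same idea.
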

\begin{proof}
    Since parallel threads are independent in the parallel phase, the total cost should be equally distributed for $k=1$ and $k>1$, i.e., $\left( \frac{7}{3}N_1 - 1 \right)n^3 = \left( \frac{19}{3}N_k - 1 \right)n^3$. Furthermore, the number of total nodes must be consistent, i.e., $N-(p-1)=N_1+(p-1)N_k$. Solving this linear system of equations gives us the results above.
\end{proof}
Note that the optimal ratio is independent of both block size $n$ and partition count $p$. Since $N^\star_1$ and $N^\star_k$ are typically non-integer, we employ a practical rounding strategy: round $N^\star_k$ both down and up, compute the corresponding $N^\star_1 = N - (p-1)N^\star_k$, evaluate the associated costs, and select the configuration with the lowest maximum cost.

\begin{figure}[tbp]
\centering
\includegraphics[width=0.9\textwidth]{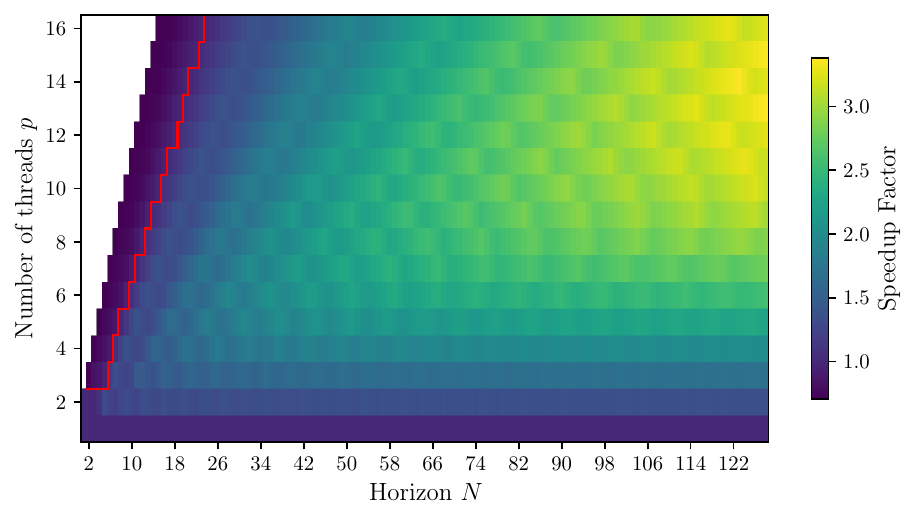}
\caption{Theoretical speed-up of parallel factorization of $\Psi_p$ compared to sequential factorization of $\Psi$, using $p$ number of threads. The red line boundary divides actual speed-ups from slowdowns.}
\label{fig:speedup_partition}
\end{figure}

Figure~\ref{fig:speedup_partition} demonstrates the theoretical speedup achieved by our approach compared to sequential factorization. The speedup increases with both the number of threads $p$ and the horizon length $N$. However, for high thread counts, the fixed overhead of the sequential portion becomes significant, potentially causing slowdowns when the horizon is small. This suggests that limiting the number of threads may be beneficial for short horizons. Additionally, the speedup saturates at a maximum value for any given thread count. As $N \to \infty$, the theoretical maximum speedup converges to
\begin{equation*}  
    \frac{7p}{19}+\frac{12}{19}  
\end{equation*}  
for $p$ threads. This fundamental limit implies that even with two threads, the maximum achievable speedup is only $36.8\%$, while four threads yield a speedup that barely exceeds $2.1$x.

\begin{algorithm}[tbp]
    \caption{Parallel factorization of $\Psi_p$}
    \label{alg:factorization_partition}
    \begin{algorithmic}[1]
        \Statex \textbf{Parallel Phase}
        \For{$k = 1,...,p$} \textbf{in parallel}
            \State \makebox[0pt][l]{$\hat{A}_k \gets A_k$}\hspace{130pt} \textbf{if} $k > 1$
            \State \makebox[0pt][l]{$\hat{B}_{1,k} \gets B_{k}$}\hspace{130pt} \textbf{if} $k > 1$
            \For{$i=1,...,N_p-1$}
                \State $\hat{D}_{ik} \gets \text{chol}(D_{ik})$ \hfill \algPotrf
                \State $\hat{E}_{ik} \gets E_{ik} \hat{D}_{ik}^{-\top}$ \hfill \algTrsm
                \State $\hat{D}_{i+1,k} \gets D_{i+1,k} - \hat{E}_{ik} \hat{E}_{ik}^\top$ \hfill \algSyrk
                \If{$k>1$}
                    \State $\hat{B}_{i,k}^\top \gets \hat{B}_{i,k}^\top \hat{D}_{ik}^{-\top}$ \hfill \algTrsm
                    \State $\hat{A}_k \gets \hat{A}_k - \hat{B}_{ik}^\top \hat{B}_{ik}$  \hfill \algSyrk
                    \State $\hat{B}_{i+1,k}^\top \gets -\hat{B}_{ik}^\top \hat{E}_{ik}^\top$ \hfill \algGemm
                \EndIf
            \EndFor
            \State $\hat{D}_{N_kk} \gets \text{chol}(D_{N_kk})$ \hfill \algPotrf
            \If{$k>1$}
                \State $\hat{B}_{N_kk}^\top \gets \hat{B}_{N_kk}^\top \hat{D}_{N_kk}^{-\top}$  \hfill \algTrsm
                \State $\hat{A}_k \gets \hat{A}_k - \hat{B}_{N_kk}^\top \hat{B}_{N_kk}$ \hfill \algSyrk
            \EndIf
            \State \makebox[0pt][l]{$\hat{F}_k \gets F_k \hat{D}_{N_kk}^{-\top}$}\hspace{130pt} \textbf{if} $k<p$ \hfill \algTrsm
            \State \makebox[0pt][l]{$\hat{H}_k \gets -\hat{F}_k \hat{B}_{N_kk}$}\hspace{130pt} \textbf{if} $1<k<p$ \hfill \algGemm

        \EndFor
        \Statex \textbf{Sequential Phase}
        \For{$k = 2,...,p-1$}
            \State $\hat{A}_k \gets \hat{A}_k - \hat{F}_{k-1} \hat{F}_{k-1}^\top$ \hfill \algSyrk
            \State $\hat{A}_k \gets \text{chol}( \hat{A}_k )$ \hfill \algPotrf
            \State $\hat{H}_k \gets \hat{H}_k \hat{A}_k^{-\top}$ \hfill \algTrsm
            \State $\hat{A}_{k+1} \gets \hat{A}_{k+1} - \hat{H}_k \hat{H}_k^\top$ \hfill \algSyrk
        \EndFor
        \State $\hat{A}_{p} \gets \hat{A}_{p} - \hat{F}_{p-1} \hat{F}_{p-1}^\top$ \hfill \algSyrk
        \State $\hat{A}_{p} \gets \text{chol}( \hat{A}_{p} )$ \hfill \algPotrf
    \end{algorithmic}
\end{algorithm}

We implemented this parallelization strategy in \cite{song2025} for our solver PIQP \cite{schwan2023} as part of the structure exploiting backend \cite{schwan2025}.

\subsection{Single-Stage Permutation}

In this section, we focus on devices such as GPUs that possess massive parallel processing capabilities and efficient synchronization mechanisms. We extend the previous approach to the extreme case where $p \geq \left\lceil N/2 \right\rceil$ parallel processing units are available. We define the permutation $P_1$ such that
\begin{equation*}
\Phi_1 = P_1\Psi P_1^\top = \left[\begin{array}{ccccc|cccc}
D_1 & & & & & & & & \\
& D_3 & & & & & & & \\
& & D_5 & & & & \multicolumn{2}{c}{\star} & \\
& & & D_7 & & & & & \\
& & & & \ddots & & & & \\
\hline
\rule{0pt}{1\normalbaselineskip} E_1 & E_2^\top & & & & D_2 & & & \\
& E_3 & E_4^\top & & & & D_4 & & \\
& & E_5 & E_6^\top & & & & D_6 & \\
& & & \ddots & \ddots & & & & \ddots
\end{array}\right].
\end{equation*}
This reordering creates a block structure in which all odd-indexed diagonal blocks $(D_1, D_3, \ldots)$ appear first, followed by even-indexed blocks $(D_2, D_4, \ldots)$. The critical observation is that all odd-indexed blocks are independent of each other and can be factorized in parallel, as they only depend on the original matrix data. The Cholesky factorization $\Phi_1=\hat{L}_1\hat{L}_1^\top$ is then given by
\begin{equation*}
\hat{L}_1 = \left[\begin{array}{ccccc|cccc}
\hat{D}_1 & & & & & & & & \\
& \hat{D}_3 & & & & & & & \\
& & \hat{D}_5 & & & & & & \\
& & & \hat{D}_7 & & & & & \\
& & & & \ddots & & & & \\
\hline
\rule{0pt}{1\normalbaselineskip} \hat{E}_1 & \hat{E}_2^\top & & & & \hat{D}_2 & & & \\
& \hat{E}_3 & \hat{E}_4^\top & & & \hat{H}_1 & \hat{D}_4 & & \\
& & \hat{E}_5 & \hat{E}_6^\top & & & \hat{H}_2 & \hat{D}_6 & \\
& & & \ddots & \ddots & & & \ddots & \ddots
\end{array}\right]
\end{equation*}
where new fill-in blocks $\hat{H}_i$ emerge as a consequence of the permutation.

To understand why this parallelization is possible, consider column two of $\Phi_1$ as an example. The elements $D_3$ and $E_2^\top$ update $\hat{D_2}$, while $D_3$ and $E_3$ update $\hat{D_4}$, and $E_2^\top$ and $E_3$ produce the fill-in $\hat{H}_1$. This pattern holds consistently for all other columns in $\Phi_1$ with odd-indexed diagonal elements. Crucially, we do not update any values in other columns with odd-indexed diagonal elements during this process. Therefore, the factorization of these columns can be fully parallelized without dependencies.

\begin{figure*}[t]
    \centering
    \begin{subfigure}[t]{0.45\textwidth}
        \centering
        \includegraphics[trim={1cm 0.8cm 1cm 0.8cm},clip,width=\textwidth]{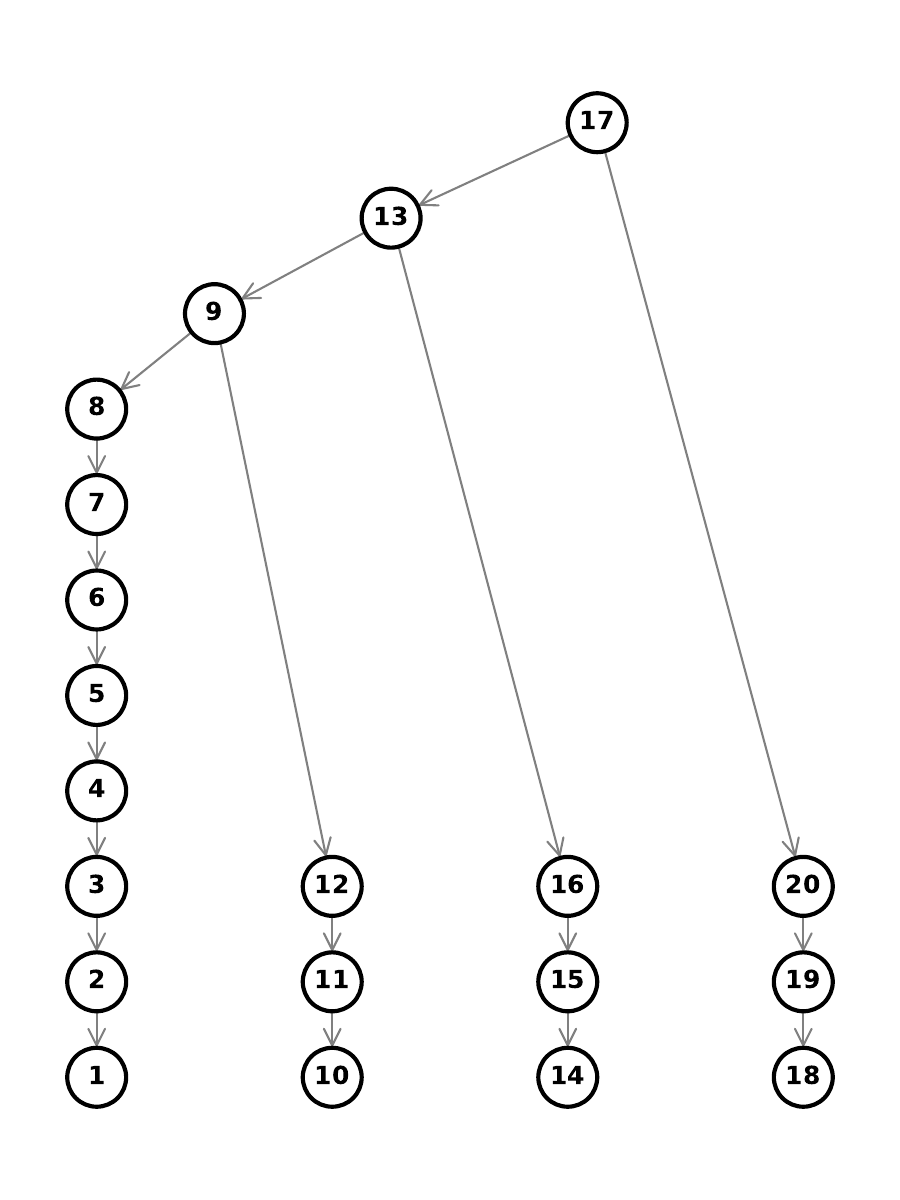}
        \caption{Elimination trees of $\Psi_4$ for $N=20$.}
        \label{fig:etree20_p}
    \end{subfigure}
    \hspace{0.05\textwidth}
    \begin{subfigure}[t]{0.45\textwidth}
        \centering
        \includegraphics[trim={1cm 0.8cm 1cm 0.8cm},clip,width=\textwidth]{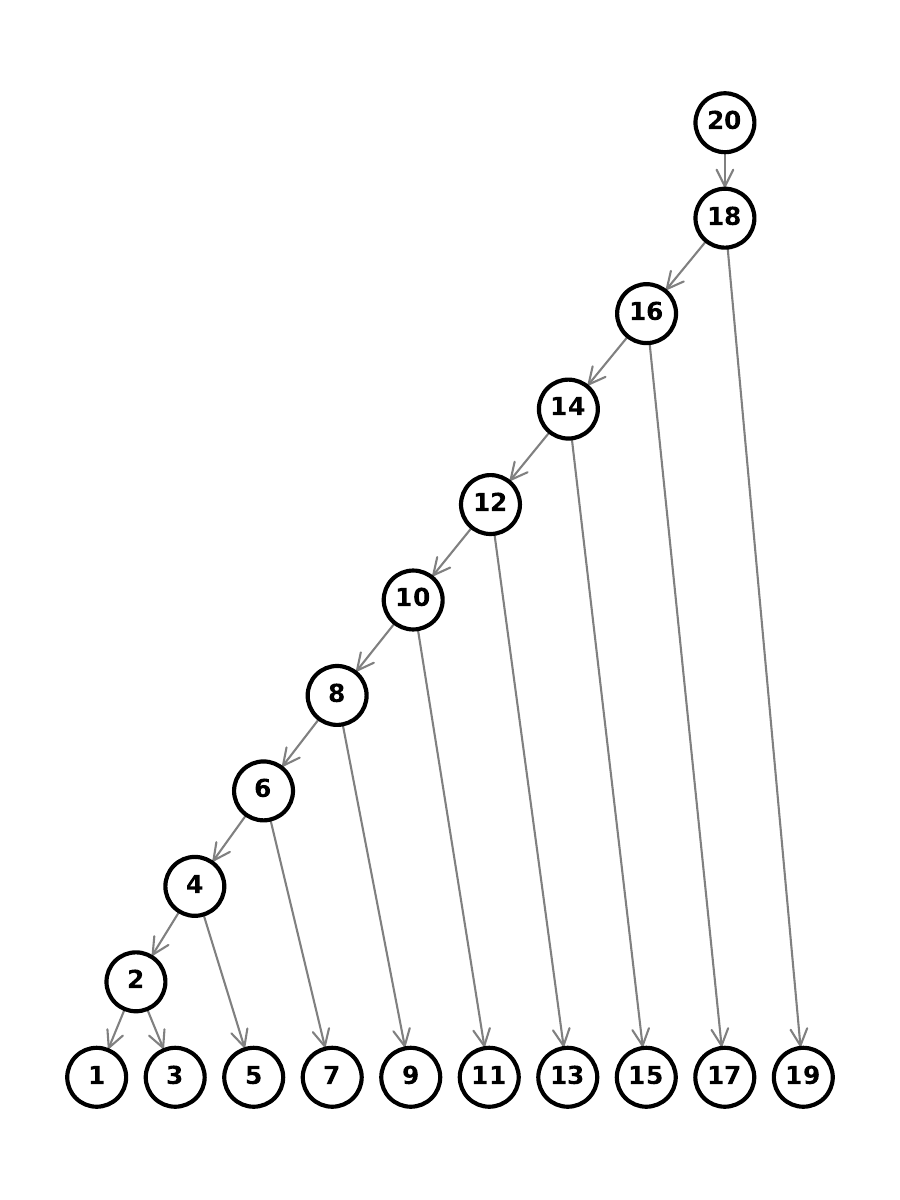}
        \caption{Elimination trees of $\Phi_1$ for $N=20$.}
        \label{fig:etree20_1}
    \end{subfigure}
    \caption{Elimination trees for different permutations of $A$ for $N=20$.}
\end{figure*}

\begin{algorithm}[t]
\caption{Parallel factorization of $\Phi_1$}
\label{alg:factorization_single_stage}
\begin{algorithmic}[1]
\For{$i = 1,3,\dots,N$} \textbf{in parallel} 
    \State $\hat{D}_i \leftarrow \chol(D_i)$ \hfill \algPotrf
    \State \makebox[0pt][l]{$\hat{E}_i \leftarrow E_i \hat{D}_i^{-\top}$}\hspace{130pt} if $i + 1 \leq N$ \hfill \algTrsm
    \State \makebox[0pt][l]{$\hat{D}_{i+1} \leftarrow D_{i+1} - \hat{E}_i \hat{E}_i^\top$}\hspace{130pt} if $i + 1 \leq N$ \hfill \algSyrk
    \State \makebox[0pt][l]{$\hat{E}_{i-1} \leftarrow \hat{D}_{i}^{-1} E_{i-1}$}\hspace{130pt} if $i > 1$ \hfill \algTrsm
    \State \makebox[0pt][l]{$H_{(i-1)/2} \leftarrow -\hat{E}_{i} \hat{E}_{i-1}$}\hspace{130pt} if $i > 1$ and $i+1 \leq N$ \hfill \algGemm
\EndFor
\For{$i = 2,4,\dots,N-1$} \textbf{in parallel}
    \State $\hat{D}_i \leftarrow \hat{D}_i - \hat{E}_i^\top \hat{E}_i$ \hfill \algSyrk
\EndFor
\State $\hat{D}_2 \leftarrow \chol(\hat{D}_2)$ \hfill \algPotrf
\For{$i = 4,6,\dots,N$}
    \State $H_{i/2-1} \leftarrow H_{i/2-1}\hat{D}_{i-2}^{-\top}$ \hfill \algTrsm
    \State $\hat{D}_i \leftarrow \hat{D}_i - H_{i/2-1} H_{i/2-1}^\top$ \hfill \algSyrk
    \State $\hat{D}_i \leftarrow \chol(\hat{D}_i)$ \hfill \algPotrf
\EndFor
\end{algorithmic}
\end{algorithm}

A useful way to visualize the computational dependencies in a Cholesky factorization is through the elimination tree \cite{heath1991}. The elimination tree is a powerful tool for understanding computational dependencies and predicting fill-in patterns in the resulting factorization. A parent node cannot be processed until all its child nodes have been fully processed. Moreover, fill-in is produced at position $(j,k)$ if there exists a path from node $i$ to node $k$ that passes through node $j$ during the elimination of node $i$.

Examining the elimination tree of $\Phi_1$ shown in Figure~\ref{fig:etree20_1}, we observe the independence of the odd-indexed diagonal blocks. The lowest level exhibits no dependencies and can therefore be processed in parallel. The elimination tree of $\Psi_4$ similarly reveals the parallel processing structure of the four partitions. However, it also exposes a limitation of the permutation $\Phi_1$; while the first level can be fully parallelized, the remaining nodes must be processed sequentially. This observation suggests that minimizing the tree height is desirable, as it determines the minimum number of sequential operations required. We address this issue in the next section.

Algorithm~\ref{alg:factorization_single_stage} presents the optimized parallel version of the factorization of $\Phi_1$, which constitutes a special case of Algorithm~\ref{alg:factorization_partition}. The computational cost with $p$ parallel threads is
\begin{equation*}
    \left(\left\lceil\frac{\left\lceil N/2\right\rceil}{p}\right\rceil\frac{19}{3} + \left\lfloor\frac{N}{2}\right\rfloor \frac{7}{3} - 2 \right) n^3
\end{equation*}
flops for $N \geq 3$. Figure~\ref{fig:speedup_one_level} illustrates the speedup relative to the sequential algorithm. Although substantial parallelism is achieved, the sequential nature of the second phase constrains overall performance. Additionally, the fill-in introduces considerable overhead when not executed in parallel. As observed in Figure~\ref{fig:speedup_one_level}, speedup is only realized with more than 3-4 threads due to this overhead.

\begin{figure}[tbp]
\centering
\vspace{-4mm}
\includegraphics[width=0.9\textwidth]{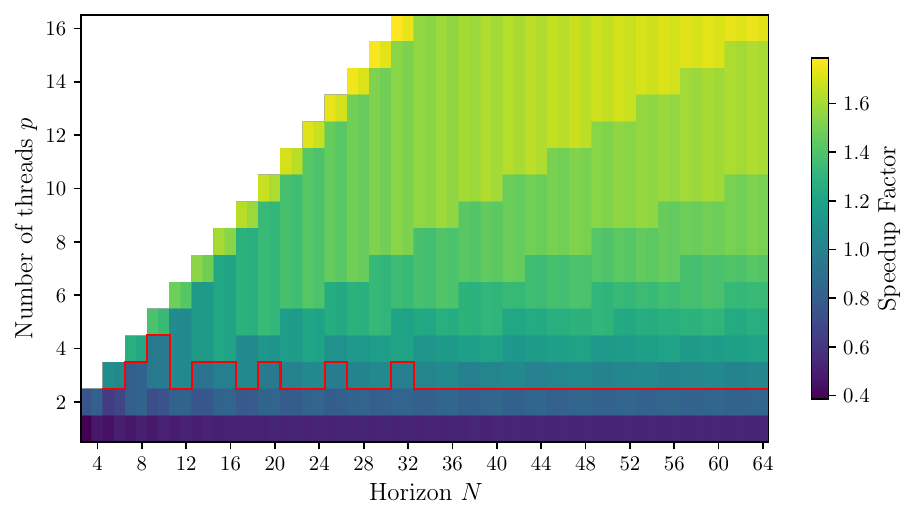}
\caption{Speed-up of parallel factorization of $\Phi_1$ compared to sequential factorization of $\Psi$. The red line boundary divides actual speed-ups from slowdowns.}
\label{fig:speedup_one_level}
\end{figure}

\subsection{Multi-Stage Permutation} \label{seq:mult_stage_perm}

The primary limitation of the single-stage permutation is the lengthy sequential tail that cannot be parallelized. The solution is to recursively permute the sequential portion until no elements remain. This approach results in the same as applying the nested dissection permutation \cite{george1978}, which has proven highly effective in parallel sparse factorization methods \cite{heath1991}. Applying this recursive strategy to $\Psi$ yields the permuted matrix
\begin{equation*}
\Phi_\infty = P_\infty \Psi P_\infty^\top = \left[\begin{array}{ccccc|cccc|ccc|c}
D_1 & & & & & & & & & & & & \\
& D_3 & & & & & & & & & & & \\
& & D_5 & & & \multicolumn{4}{c|}{\star} & & \star & & \star \\
& & & D_7 & & & & & & & & & \\
& & & & \ddots & & & & & & & & \\
\hline
\rule{0pt}{1\normalbaselineskip} E_1 & E_2^\top & & & & D_2 & & & & & & & \\
& & E_5 & E_6^\top & & & D_6 & & & & & & \\
& & & & E_9 & & & D_{10} & & & & & \\
& & & & \ddots & & & & \ddots & & & & \\
\hline
\rule{0pt}{1\normalbaselineskip} & E_3 & E_4^\top & & & & & & & D_4 & & & \\
& & & & & & & & & & D_{12} & & \\
& & & & \ddots & & & & & & & \ddots \\
\hline
\rule{0pt}{1\normalbaselineskip} & & & & \ddots & & & & & & & & \ddots
\end{array}\right],
\end{equation*}
with corresponding Cholesky factor
\begin{equation*}
\hat{L}_\infty = \left[\begin{array}{ccccc|cccc|ccc|c}
\hat{D}_1 & & & & & & & & & & & & \\
& \hat{D}_3 & & & & & & & & & & & \\
& & \hat{D}_5 & & & & & & & & & & \\
& & & \hat{D}_7 & & & & & & & & & \\
& & & & \ddots & & & & & & & & \\
\hline
\rule{0pt}{1\normalbaselineskip} \hat{E}_{1,1} & \hat{E}_{1,2}^\top & & & & \hat{D}_2 & & & & & & & \\
& & \hat{E}_{1,5} & \hat{E}_{1,6}^\top & & & \hat{D}_6 & & & & & & \\
& & & & \hat{E}_{1,9} & & & \hat{D}_{10} & & & & & \\
& & & & \ddots & & & & \ddots & & & & \\
\hline
\rule{0pt}{1\normalbaselineskip} & \hat{E}_{1,3} & \hat{E}_{1,4}^\top & & & \hat{E}_{2,1} & \hat{E}_{2,2}^\top & & & \hat{D}_4 & & & \\
& & & & & & & \hat{E}_{2,5} & \hat{E}_{2,6} & & \hat{D}_{12} & & \\
& & & & \ddots & & & & \ddots & & & \ddots \\
\hline
\rule{0pt}{1\normalbaselineskip} & & & & \ddots & & & & \ddots & & & \ddots & \ddots
\end{array}\right].
\end{equation*}
While the resulting permutation is identical, we have the advantage of knowing the structure beforehand, which allows us to optimize the memory layout and access patterns. This can significantly improve performance, as we demonstrate in the later sections.

Note that the same permutation implicitly arises when applying (block) cyclic reduction \cite{heller1976, gander1998} to the original block tridiagonal matrix. While equivalent, the perspective of computing the Cholesky factorization of a permuted matrix appears strictly more general and enables natural extensions to arrow structures, similar to \cite{song2025}.

Algorithm~\ref{alg:factorization_multi_stage} provides the detailed factorization procedure for $\Phi_\infty$, with a visual representation shown in Figure~\ref{fig:fac_viz}. At each iteration, one level is processed by parallel threads. 

To illustrate the algorithm, we walk through the example outlined in Figure~\ref{fig:fac_viz}, which processes the second column of the second iteration. The operation performing the Cholesky factorization $\hat{D}_6 \gets \chol(\hat{D}_6)$, followed by the triangular solves $\hat{E}_{2,3} \gets \hat{E}_{2,3}\hat{D}_6^{-\top}$ and $\hat{E}_{2,2} \gets \hat{D}_6^{-1}\hat{E}_{2,2}$ (cdiv), and the fill-in producing update $\hat{E}_{3,1} \gets -\hat{E}_{2,3}\hat{E}_{2,2}^\top$, are identical to those in the single-stage base algorithm.
However, we must take care to avoid introducing race conditions. In a purely right-looking algorithm, we would update both $\hat{D}_4 \gets \hat{D}_4 - \hat{E}_{2,2}^\top\hat{E}_{2,2}$ and $\hat{D}_8 \gets \hat{D}_8 - \hat{E}_{2,3}\hat{E}_{2,3}^\top$. Unfortunately, the thread processing the first column also updates $\hat{D}_4 \gets \hat{D}_4 - \hat{E}_{2,1}\hat{E}_{2,1}^\top$, resulting in a race condition. To avoid this, we defer the operation $\hat{D}_4 \gets \hat{D}_4 - \hat{E}_{2,2}^\top\hat{E}_{2,2}$ to the next iteration, as symbolized by the dashed arrow in Figure~\ref{fig:fac_viz}. In the subsequent iteration, this becomes a left-looking operation. In our particular example, these correspond to the operations $\hat{D}_6 \gets \hat{E}_{1,6}^\top\hat{E}_{1,6}$ and $\hat{D}_8 \gets \hat{E}_{1,8}^\top\hat{E}_{1,8}$. Thus, this algorithm becomes a hybrid left-/right-looking algorithm.

\begin{algorithm}[t]
\caption{Factorization of $\Phi_\infty$ optimized for parallel}
\label{alg:factorization_multi_stage}
\begin{algorithmic}[1]
\For{$i = 1,\dots,N$} \textbf{in parallel} 
    \State $\hat{D}_i \leftarrow D_i$
    \State \makebox[0pt][l]{$\hat{E}_{1,i} \leftarrow E_{i}$}\hspace{170pt} if $i < N$
\EndFor
\For{$s = 1,2,4,\dots,2^{\lfloor\log_2(N)\rfloor}$}
\For{$i = s,3s,5s,\dots,N$} \textbf{in parallel} 
    \State \makebox[0pt][l]{$\hat{D}_i \leftarrow \hat{D}_i - \hat{E}_{s-1,2i/s}^\top \hat{E}_{s-1,2i/s}$}\hspace{155pt} if $s > 1$ and $i \leq N - s/2$ \hfill \algSyrk
    \State $\hat{D}_i \leftarrow \chol(\hat{D}_i)$ \hfill \algPotrf
    \State \makebox[0pt][l]{$\hat{D}_{i+s} \leftarrow \hat{D}_{i+s}$}\hspace{155pt} if $s > 1$ and $i + s \leq N - s/2$ \hfill \algSyrk
    \Statex $\hspace{6.5em} - \hat{E}_{s-1,2i/s+2}^\top \hat{E}_{s-1,2i/s+2}$
    \State \makebox[0pt][l]{$\hat{E}_{s,i/s} \leftarrow \hat{E}_{s,i/s} \hat{D}_i^{-\top}$}\hspace{155pt} if $i + s \leq N$ \hfill \algTrsm
    \State \makebox[0pt][l]{$\hat{D}_{i+s} \leftarrow \hat{D}_{i+s} - \hat{E}_{s,i/s} \hat{E}_{s,i/s}^\top$}\hspace{155pt} if $i + s \leq N$ \hfill \algSyrk
    \State \makebox[0pt][l]{$\hat{E}_{s,i/s-1} \leftarrow \hat{D}_{i}^{-1} \hat{E}_{s,i/s-1}$}\hspace{155pt} if $i > s$ \hfill \algTrsm
    \State \makebox[0pt][l]{$\hat{E}_{s+1,(i-s)/(2s)} \leftarrow - \hat{E}_{s,i/s} \hat{E}_{s,i/s-1}$}\hspace{155pt} if $i > s$ and $i + s \leq N$ \hfill \algGemm
\EndFor
\EndFor
\end{algorithmic}
\end{algorithm}

\begin{figure}[tbp]
\centering
\includegraphics[width=1.0\textwidth]{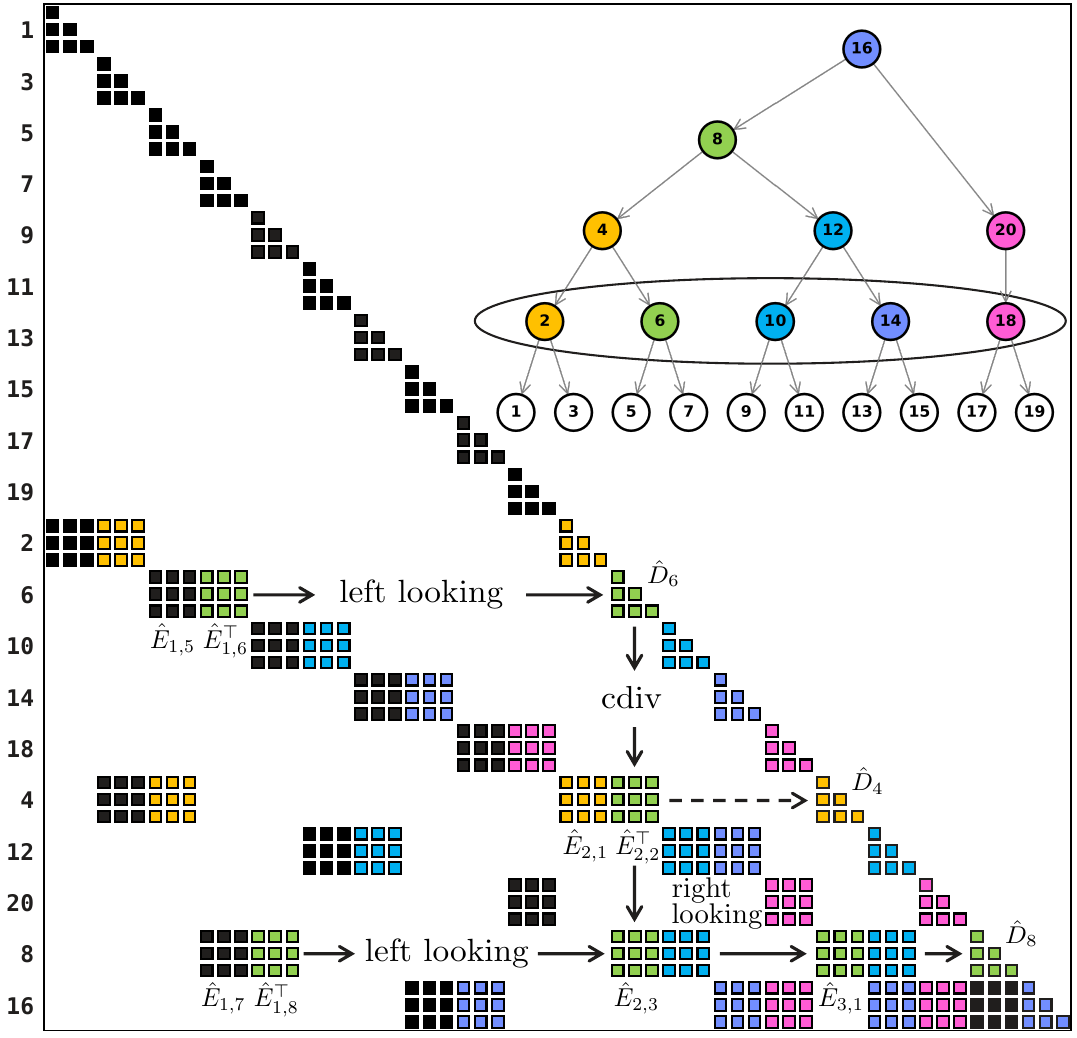}
\caption{Visualization of the Cholesky factorization of $\Phi_\infty$ for $N=20$ using Algorithm~\ref{alg:factorization_multi_stage}. Visualized is the parallel iteration for $s=2$, i.e., the second iteration. The full arrows operations updating data, while the dashed arrow shows a deferred operation.}
\label{fig:fac_viz}
\end{figure}

The algorithm requires $\lfloor\log_2(N)\rfloor+1$ iterations, each of which can be executed in parallel. With $p$ threads available, the computational cost becomes
\begin{equation}
    \left(\left\lceil\frac{N/2}{p}\right\rceil \frac{16}{3} + \sum_{i = 1}^{\lfloor\log_2(N)\rfloor-1}
    \left\lceil\frac{\left\lceil N/2^{i+1}\right\rceil}{p}\right\rceil\frac{22}{3} + \frac{4}{3} \right)n^3
\end{equation}
flops for $N \geq 2$. The first term accounts for the initial iteration, which contains no left-looking operations. The final term represents the last iteration, which consists of a single left-looking operation and one Cholesky factorization. When $p<N/2$, the first term satisfies $\left\lceil\frac{N/2}{p}\right\rceil > 1$, indicating that the available threads are insufficient to fully parallelize the first level. This results in increased execution time for the initial iteration and, consequently, higher overall computational cost. While modern GPUs provide a massive number of parallel units, for sufficiently large $N$, we will eventually saturate them, making this term significant.

\begin{figure}[tbp]
\centering
\includegraphics[width=1.0\textwidth]{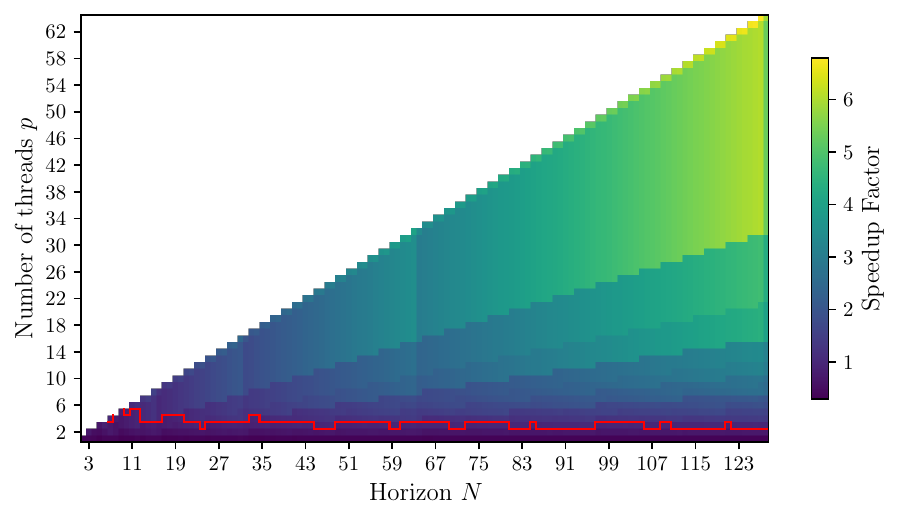}
\caption{Theoretical speed-up of parallel factorization of $\Phi_\infty$ compared to sequential factorization of $\Psi$. The red line boundary divides actual speed-ups from slowdowns.}
\label{fig:speedup_all_level}
\end{figure}

This approach reduces the complexity from $\mathcal{O}(Nn^3)$ in the sequential case to $\mathcal{O}(\log_2(N)n^3)$ for the parallel multi-stage factorization. Figure~\ref{fig:speedup_all_level} demonstrates the expected speedup relative to sequential factorization. The plot exhibits discrete breakpoints at $\lfloor\log_2(N)\rfloor$ with linear speedup increases between them. This behavior stems from the elimination tree height remaining constant for $N \in [2^i,2^{i+1})$, while the cost per iteration remains unchanged given sufficient parallel cores.

\begin{figure}[tbp]
\centering
\includegraphics[width=1.0\textwidth]{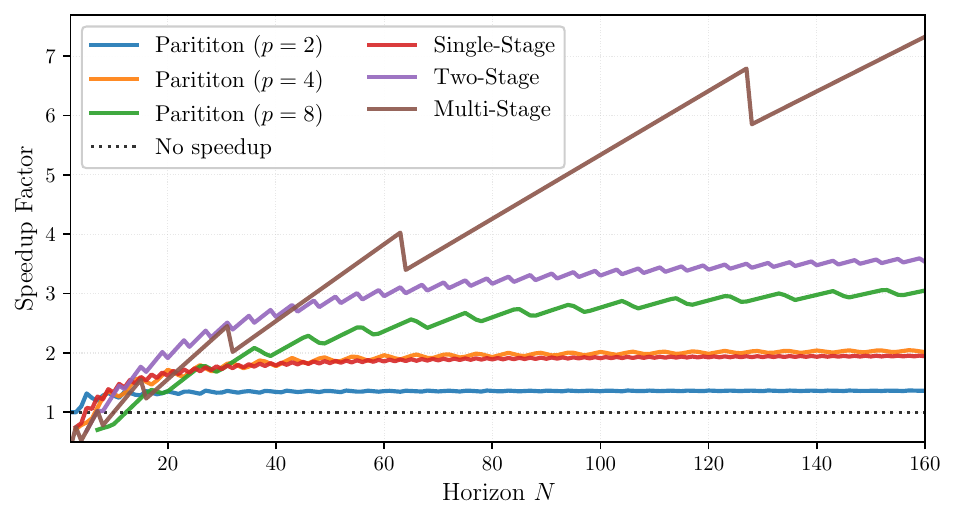}
\caption{Speed-up of parallel factorization of all discussed parallel factorization approaches compared to sequential factorization of $\Psi$. The single-, two-, and multi-stage approaches assume $p=\left\lceil N/2\right\rceil$ parallel threads.}
\label{fig:speedup_comparison_all_strategies}
\end{figure}

Figure~\ref{fig:speedup_comparison_all_strategies} compares the speedup performance of all parallel factorization approaches discussed thus far, relative to the sequential factorization of $\Psi$. We also include the expected speedup for a two-stage variant, which applies the idea underlying the multi-stage permutation but terminates after the second recursive iteration in the derivation. While we do not discuss this variant in detail here, it provides a useful intermediate benchmark. We observe that depending on the horizon length $N$, different methods achieve varying speedups, with some even performing slower than the sequential factorization scheme. This variation stems from the different amounts of overhead and fill-in produced by each method. Consequently, one could envision a dispatch mechanism that selects the method yielding the best speedup for a given problem instance. In the remainder of this chapter, we focus on the multi-stage variant as it provides the largest speedup for large horizons $N$, leaving such adaptive method selection as future work.

\subsubsection{Cholesky Solve}

\begin{algorithm}[t]
\caption{Cholesky solve $\Psi x = P_\infty^\top \hat{L}_\infty \hat{L}_\infty^\top P_\infty x = b$ optimized for parallel}
\label{alg:solve_multi_stage}
\begin{algorithmic}[1]
\Statex \textbf{Forward substitution: $P_\infty^\top \hat{L}_\infty y = b$}
\State $y \leftarrow b$
\For{$s = 1,2,4,\dots,2^{\lfloor\log_2(N)\rfloor}$}
\For{$i = s,3s,5s,\dots,N$} \textbf{in parallel} 
    \State $y_i \leftarrow \hat{D}_i^{-1} y_i$ \hfill \texttt{trsm} \hspace{5pt} $n^2 m$
    \State \makebox[0pt][l]{$y_{i+s} \leftarrow y_{i+s} - \hat{E}_{s,i/s} y_i$}\hspace{150pt} if $i + s \leq N$ \hfill \texttt{gemm} \hspace{0pt} $2n^2 m$
    \State \makebox[0pt][l]{$y_{i-s} \leftarrow y_{i-s} - \hat{E}_{s,i/s-1}^\top y_i$}\hspace{150pt} if $i > s$ \hfill \texttt{gemm} \hspace{0pt} $2n^2 m$
\EndFor
\EndFor
\Statex \textbf{Backward substitution: $\hat{L}_\infty^\top P_\infty x = y$}
\State $x \leftarrow y$
\For{$s = 2^{\lfloor\log_2(N)\rfloor}, 2^{\lfloor\log_2(N)\rfloor-1}, \dots, 2, 1$}
\For{$i = s,3s,5s,\dots,N$} \textbf{in parallel} 
    \State \makebox[0pt][l]{$x_i \leftarrow \hat{x}_i - \hat{E}_{s,i/s}^\top x_{i+s}$}\hspace{150pt} if $i + s \leq N$ \hfill \texttt{gemm} \hspace{0pt} $2n^2 m$
    \State \makebox[0pt][l]{$x_i \leftarrow x_i - \hat{E}_{s,i/s-1} x_{i-s}$}\hspace{150pt} if $i > s$ \hfill \texttt{gemm} \hspace{0pt} $2n^2 m$
    \State $x_i \leftarrow \hat{D}_i^{-\top} x_i$ \hfill \texttt{trsm} \hspace{5pt} $n^2 m$
\EndFor
\EndFor
\end{algorithmic}
\end{algorithm}

So far, we have only examined the factorization $P_\infty \Psi P_\infty^\top = \hat{L}_\infty \hat{L}_\infty^\top$. To solve for a right-hand side, we rewrite $\Psi x = b$ as $\Psi x = P_\infty^\top \hat{L}_\infty \hat{L}_\infty^\top P_\infty x = b$ for $x,b \in \mathbb{R}^{Nn \times m}$. Following the standard Cholesky approach, we find $x$ through two phases: forward substitution followed by backward substitution. In the forward substitution phase, we solve $P_\infty^\top \hat{L}_\infty y = b$ for the auxiliary variable $y \in \mathbb{R}^{Nn\times m}$, then in the backward phase we solve $\hat{L}_\infty^\top P_\infty x = y$. Algorithm~\ref{alg:solve_multi_stage} presents this procedure. The solution phase requires $2\lfloor\log_2(N)\rfloor + 2$ iterations and with $p$ threads, the computational cost is
\begin{equation}
    \left(\sum_{i = 0}^{\lfloor\log_2(N)\rfloor}
    \left\lceil\frac{\left\lceil N/2^{i+1}\right\rceil}{p}\right\rceil10 - 8\left( \left\lceil\frac{N/2}{p}\right\rceil  + 1 \right) \right)n^2m
\end{equation}
flops. The final term accounts for the reduced number of matrix operations when $s=1$ and $s=2^{\lfloor\log_2(N)\rfloor}$. This yields $\mathcal{O}(\log_2(N)n^2m)$ complexity compared to $\mathcal{O}(Nn^2m)$ for the sequential variant.

\subsection{Multi-Stage Permutation with Operation Parallelism} \label{sec:multi_stage_streams}

\begin{algorithm}[t]
\caption{Right Looking Factorization of $\Phi_\infty$ optimized for parallel}
\label{alg:factorization_multi_stage_right_looking_atomics}
\begin{algorithmic}[1]
\For{$i = 1,\dots,N$} \textbf{in parallel} 
    \State $\hat{D}_i \leftarrow D_i$
    \State \makebox[0pt][l]{$\hat{E}_{1,i} \leftarrow E_{i}$}\hspace{165pt} if $i < N$
\EndFor
\For{$s = 1,2,4,\dots,2^{\lfloor\log_2(N)\rfloor}$}
\For{$i = s,3s,5s,\dots,N$} \textbf{in parallel} 
    \State $\hat{D}_i \leftarrow \chol(\hat{D}_i)$ \hfill \algPotrf
    \State \makebox[0pt][l]{$\hat{E}_{s,i/s-1} \leftarrow \hat{D}_{i}^{-1} \hat{E}_{s,i/s-1}$}\hspace{150pt} if $i > s$ \hfill \algTrsm
    \State \makebox[0pt][l]{$\hat{E}_{s,i/s} \leftarrow \hat{E}_{s,i/s} \hat{D}_i^{-\top}$}\hspace{150pt} if $i + s \leq N$ \hfill \algTrsm
    \State \makebox[0pt][l]{$\hat{D}_{i-s} \xleftarrow{\odot} \hat{D}_{i-s} - \hat{E}_{s,i/s-1}^\top \hat{E}_{s,i/s-1}$}\hspace{150pt} if $i > s$ \hfill \algSyrk
    \State \makebox[0pt][l]{$\hat{D}_{i+s} \xleftarrow{\odot} \hat{D}_{i+s} - \hat{E}_{s,i/s} \hat{E}_{s,i/s}^\top$}\hspace{150pt} if $i + s \leq N$ \hfill \algSyrk
    \State \makebox[0pt][l]{$\hat{E}_{s+1,(i-s)/(2s)} \leftarrow - \hat{E}_{s,i/s} \hat{E}_{s,i/s-1}$}\hspace{150pt} if $i > s$ and $i + s \leq N$ \hfill \algGemm
\EndFor
\EndFor
\end{algorithmic}
\end{algorithm}

Our approach thus far has focused on parallelization at the column level, as illustrated in Figure~\ref{fig:fac_viz}. However, we can exploit additional parallelism at the operation level. For instance, in Algorithm~\ref{alg:factorization_multi_stage}, the left-looking operations on Lines 7 and 9 can execute concurrently. When we trace the operation dependencies, the critical path contains four sequential operations: left-looking rank-$k$ updates (Lines 7 and 9), Cholesky factorization (Line 8), tridiagonal solve (Lines 10 and 12), and right-looking updates (Lines 11 and 13). By introducing atomic operations, we can reduce this critical path to just three operations using a fully right-looking factorization routine, as presented in Algorithm~\ref{alg:factorization_multi_stage_right_looking_atomics}, where atomic operations are denoted with the $\odot$ symbol.

\begin{figure}[tbp]
\centering
\includegraphics[width=1.0\textwidth]{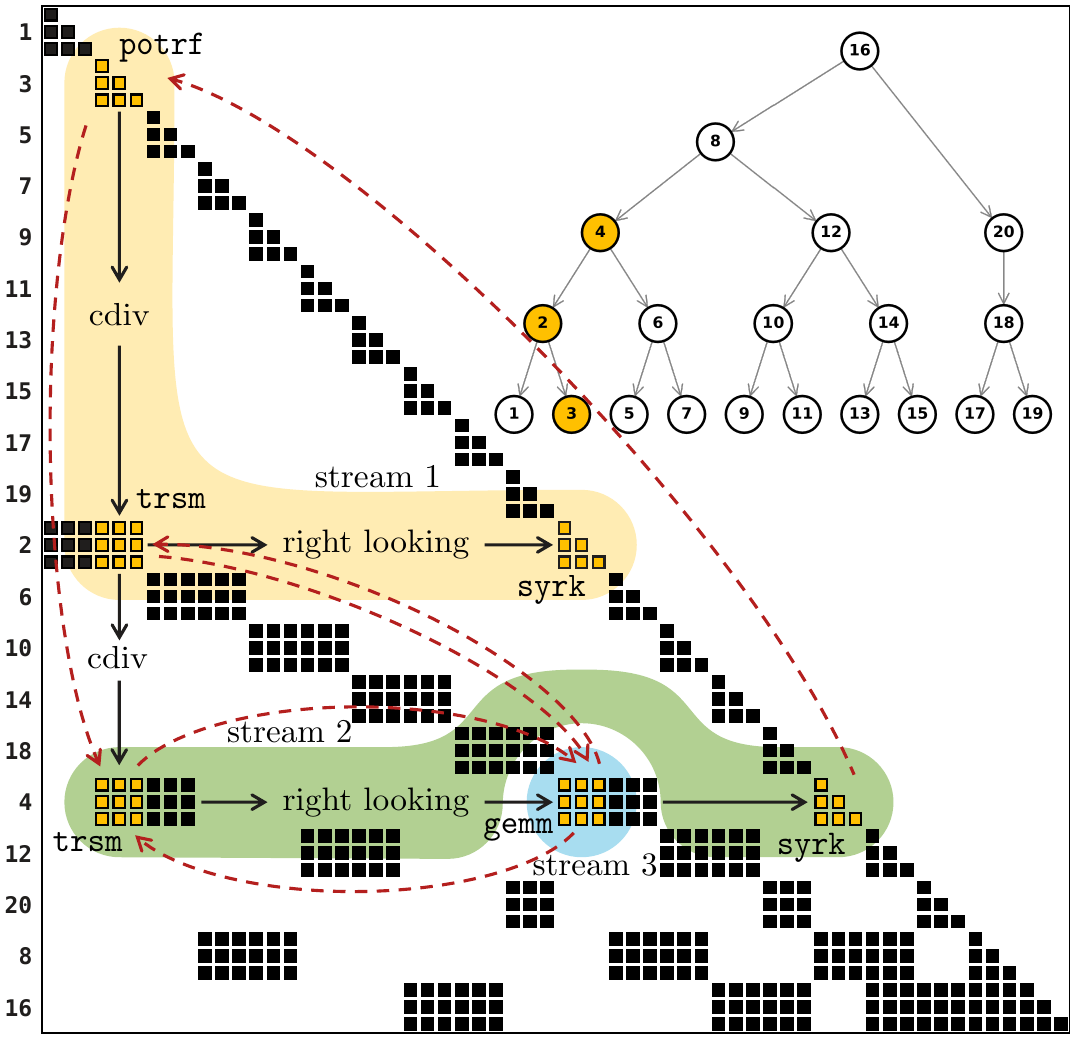}
\caption{Visualization of the Cholesky factorization of $\Phi_\infty$ for $N=20$ using Algorithm~\ref{alg:factorization_multi_stage_right_looking_atomics}. Visualized is the assignment of operations on different CUDA streams that run in parallel. The red dashed arrows represent a synchronization between streams, e.g., the \texttt{gemm} operations in stream 3 has to wait for the \texttt{trsm} operations in stream 1 and 2 to finish first.}
\label{fig:stream_viz}
\end{figure}

The performance overhead of atomic operations on GPUs is minimal under low contention scenarios, with latency comparable to standard memory operations \cite{jia2019dissectingnvidiaturingt4}. Contention is the number of threads that write to the same memory address at the same time. Since our implementation has a maximum contention of only two, the atomic operation cost becomes negligible compared to the benefit of reducing the dependency chain length from four to three operations. We implement this operation-level parallelism using CUDA streams, as detailed in Section~\ref{sec:numerical_implementation_gpu}. Figure~\ref{fig:stream_viz} visualizes how Algorithm~\ref{alg:factorization_multi_stage} maps to CUDA streams, where each highlighted region corresponds to a specific stream. Operations within a single stream execute sequentially, while operations across different streams run in parallel. Inter-stream synchronization ensures that dependency constraints are satisfied. 

We now examine the example illustrated in Figure~\ref{fig:stream_viz} in detail. The computation begins with the Cholesky factorization (\texttt{potrf}), followed by the triangular solves (\texttt{trsm}), which must execute after the Cholesky factorization completes. In stream 1, this ordering is automatically satisfied through sequential execution. However, the triangular solve in stream 2 requires explicit synchronization; otherwise, the \texttt{trsm} operation could execute concurrently with the \texttt{potrf} operation in stream 1. This dependency and its corresponding synchronization event are represented by the red dashed arrow in Figure~\ref{fig:stream_viz}. Similarly, the \texttt{gemm} operation depends on the completion of the preceding \texttt{trsm} operations, but can execute in parallel with the rank-$k$ update (\texttt{syrk}) operations. When the next iteration of Algorithm~\ref{alg:factorization_multi_stage_right_looking_atomics} is scheduled, the \texttt{syrk} operation in stream 2 may still be executing. Consequently, the \texttt{potrf} operation in the subsequent iteration must synchronize accordingly. The same synchronization requirement applies to the \texttt{trsm} operation in stream 2, which must wait for the previous \texttt{gemm} operation to complete.

\begin{sidewaysfigure}[htbp]
\centering
\ifisThesis
\vspace{16cm}
\fi
\includegraphics[width=1\textwidth]{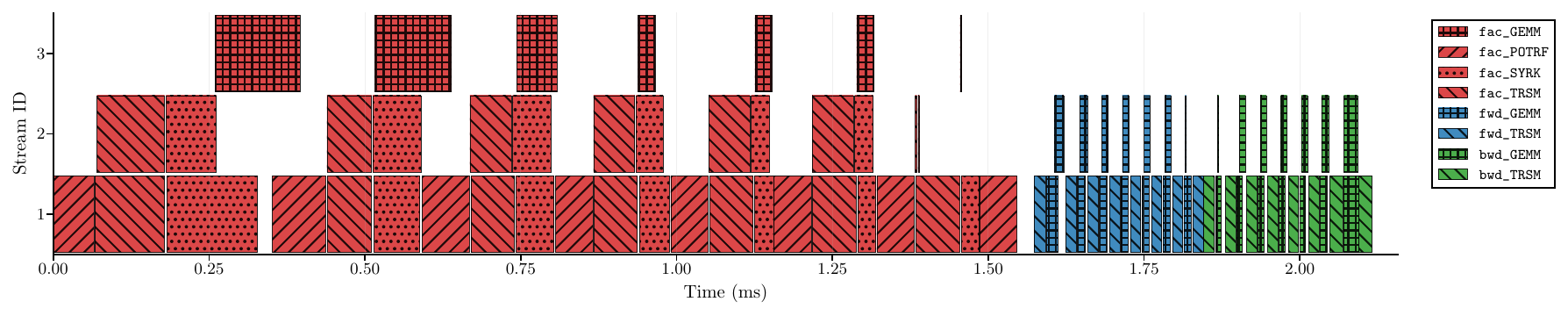} \\
\vspace{3mm}
\includegraphics[width=1\textwidth]{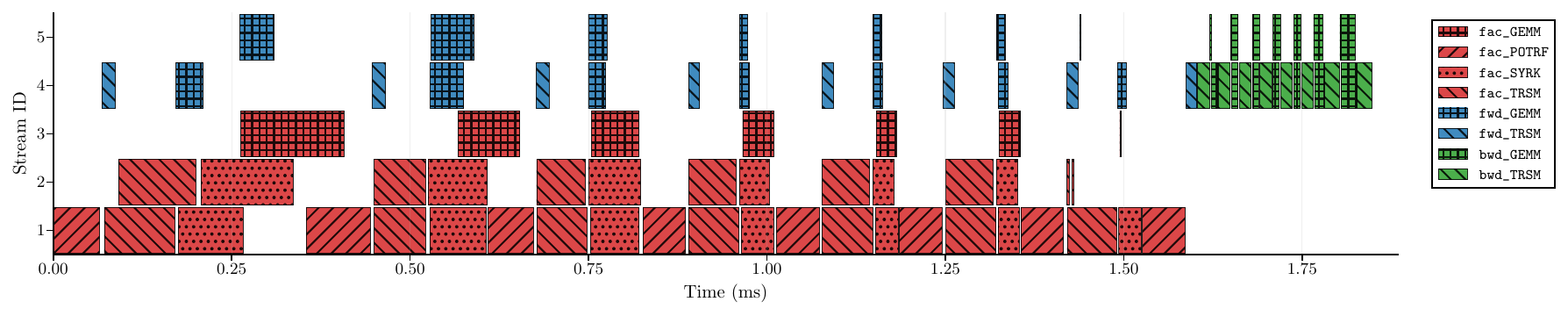}
\caption{Traces of kernel launches of Algorithm~\ref{alg:factorization_multi_stage_right_looking_atomics} (\texttt{fac}: factorization) and Algorithm~\ref{alg:solve_multi_stage} (\texttt{fwd}: forward substitution, \texttt{bwd}: backward substitution) for $n=64$ and $N=128$. In the upper trace, the factorization and solve are run in sequential and in the lower trace, the forward substitution is interlaced with the factorization in parallel.}
\label{fig:fac_sol_timeline_both}
\end{sidewaysfigure}

Figure~\ref{fig:fac_sol_timeline_both} shows traces of kernel launches for Algorithm~\ref{alg:factorization_multi_stage_right_looking_atomics} applied to a problem with $n=64$ and $N=128$. The upper trace clearly demonstrates how different operations overlap and execute in parallel. In particular, note that the first \texttt{gemm} operation overlaps with the \texttt{potrf} of the second iteration, thereby maximizing utilization of available GPU resources. Furthermore, we can execute the forward substitution pass in Algorithm~\ref{alg:solve_multi_stage} concurrently with the factorization as soon as the relevant data becomes available. This is visualized in the lower trace of Figure~\ref{fig:fac_sol_timeline_both}. This overlapping strategy reduces the overall computation time by approximately $12\%$, leading to more efficient GPU resource utilization.

Further implementation details are discussed in Section~\ref{sec:fused_and_blocked_kernels}.

\section{Numerical Implementation} \label{sec:numerical_implementation_gpu}

We conducted all experiments on NVIDIA hardware, specifically using RTX 3080 and RTX 5090 GPUs, while CPU-based experiments were performed on an AMD Ryzen 9 3900X 3.80 GHz processor. The GPU implementation was developed in Python using the Warp library \cite{warp2022}, and the CPU implementation in C++ using the BLASFEO library \cite{frison2018} with custom Python interfaces. The Warp library generates CUDA and C++ code that is just-in-time (JIT) compiled and exposed through a Python interface. This generated code can serve as a foundation for developing more optimized, manually tuned kernels in future work. In particular, more careful memory management could reduce shared memory usage, thereby increasing kernel occupancy.

Before we discuss our numerical results further, we provide a brief introduction to the CUDA programming model for NVIDIA GPUs.

\subsection{CUDA Programming Model}

\begin{figure}[tbp]  
\centering  
\includegraphics[width=1.0\textwidth]{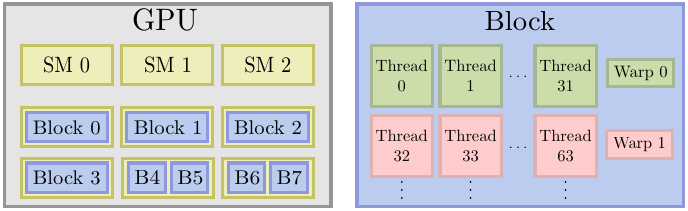}  
\caption{CUDA Programming Model}  
\label{fig:gpu_model}  
\end{figure}

NVIDIA GPUs employ a hierarchical parallel execution model designed for massive parallelism. As illustrated in Figure~\ref{fig:gpu_model}, the CUDA programming model organizes computation into a three-level hierarchy: grids, blocks, and threads.

At the hardware level, a GPU consists of multiple Streaming Multiprocessors (SMs), each capable of executing instructions independently (left panel of Figure~\ref{fig:gpu_model}). When a CUDA kernel is launched, it creates a grid of thread blocks that are distributed across available SMs for execution. Each block is assigned to a single SM and cannot be split across multiple SMs, though one SM may execute multiple blocks concurrently depending on resource availability. It is also possible to run different kernels in parallel as long as there are enough free SMs available, as illustrated with blocks 4-7 in Figure~\ref{fig:gpu_model}.

Within each block, threads are organized into groups of 32 called warps, which represent the fundamental unit of execution on NVIDIA GPUs (right panel of Figure~\ref{fig:gpu_model}). All threads within a warp execute the same instruction simultaneously in a Single Instruction, Multiple Thread (SIMT) fashion. This execution model achieves high throughput when threads in a warp follow the same control flow path, but suffers from performance degradation when threads diverge, as divergent branches must be executed serially.

Thread blocks provide a mechanism for cooperation and synchronization among threads through shared memory. Shared memory is a fast, on-chip memory space accessible to all threads within the same block. This shared memory hierarchy, combined with the ability to synchronize threads within a block, enables efficient implementation of algorithms requiring local data sharing and coordination. However, synchronization across different blocks is generally not supported within a single kernel execution, requiring careful algorithm design to respect this constraint.

\begin{table}[tbp]
\centering
\caption{Hardware specifications for recent NVIDIA consumer GPU generations.}
\label{tab:gpu_specs}
\begin{tabular}{lccc}
\toprule
\textbf{Specification} & \textbf{RTX 3080} & \textbf{RTX 4090} & \textbf{RTX 5090} \\
\midrule
Architecture & Ampere & Ada Lovelace & Blackwell \\
Streaming Multiprocessors & 68 & 128 & 170 \\
CUDA Cores & 8,704 & 16,384 & 21,760 \\
Tensor Cores & 272 & 512 & 680 \\
Base Clock & 1.44 GHz & 2.23 GHz & 2.01 GHz \\
Boost Clock & 1.71 GHz & 2.52 GHz & 2.41 GHz \\
Memory Size & 10 GB GDDR6X & 24 GB GDDR6X & 32 GB GDDR7 \\
Memory Bandwidth & 760 GB/s & 1,008 GB/s & 1,792 GB/s \\
Shared Memory per SM & 128 KB & 128 KB & 128 KB \\
Max Threads per Block & 1,024 & 1,024 & 1,024 \\
\bottomrule
\end{tabular}
\end{table}

Table~\ref{tab:gpu_specs} summarizes the hardware specifications for the latest three generations of NVIDIA consumer GPUs. The progression from RTX 30 to RTX 50 series demonstrates significant architectural improvements across multiple dimensions. The SM count has increased substantially from 68 in the RTX 3080 to 170 in the RTX 5090, directly translating to higher parallel throughput capacity. This growth in SMs enables the execution of more concurrent thread blocks, making newer generations particularly effective for throughput-oriented workloads that can exploit massive parallelism. Concurrently, boost clock frequencies have also improved, rising from 1.71 GHz in the RTX 3080 to 2.41 GHz in the RTX 5090. Higher frequencies reduce the latency of individual operations, benefiting algorithms with sequential dependencies or limited parallelism. The combination of increased SM count and higher frequencies results in substantial improvements in both throughput and latency characteristics. Additionally, memory bandwidth has more than doubled from 760 GB/s to 1,792 GB/s, alleviating memory bottlenecks that often limit performance in data-intensive applications.

\subsection{Fused and Blocked Kernels} \label{sec:fused_and_blocked_kernels}

The warp library \cite{warp2022} provides a tile programming model that acts as a wrapper for the NVIDIA mathdx library, which includes cuBLASDx and cuSolverDx. These libraries are counterparts to the well-established cuBLAS and cuSolver libraries from NVIDIA, which provide optimized BLAS and LAPACK routines for NVIDIA GPUs at the host API level. In contrast, the mathdx libraries can be used as building blocks directly within CUDA kernels.

We implemented two different versions of the factorization and solve kernels: a fused variant and a blocked variant. The fused kernel implements Algorithm~\ref{alg:factorization_multi_stage} by fusing all operations in a single iteration (Lines 6 to 14) into one kernel. It loads data into shared memory, processes all operations, and then writes the results back to global memory, thereby minimizing memory transfers and kernel calls. However, this approach is limited by the available shared memory. For block sizes $n$ that are too large, the required shared memory exceeds what a streaming multiprocessor (SM) can provide.

To address this limitation, the blocked variant tiles the operations. For example, in the case of Cholesky factorization, instead of loading an entire matrix of size $n \times n$, we load only a submatrix of size $b \times b$ and process it tile by tile. Beyond matrix-level tiling, we further parallelize at the operation level as outlined in Section~\ref{sec:multi_stage_streams} and Algorithm~\ref{alg:factorization_multi_stage_right_looking_atomics}. Specifically, we execute operations in parallel across three CUDA streams. To further boost performance and reduce Python overhead, we use CUDA graphs to launch the kernel sequence. CUDA graphs enable recording and replaying CUDA kernel launches, thereby reducing the launch overhead of individual kernel invocations.

\begin{figure}[tbp]
\centering
\includegraphics[width=1.0\textwidth]{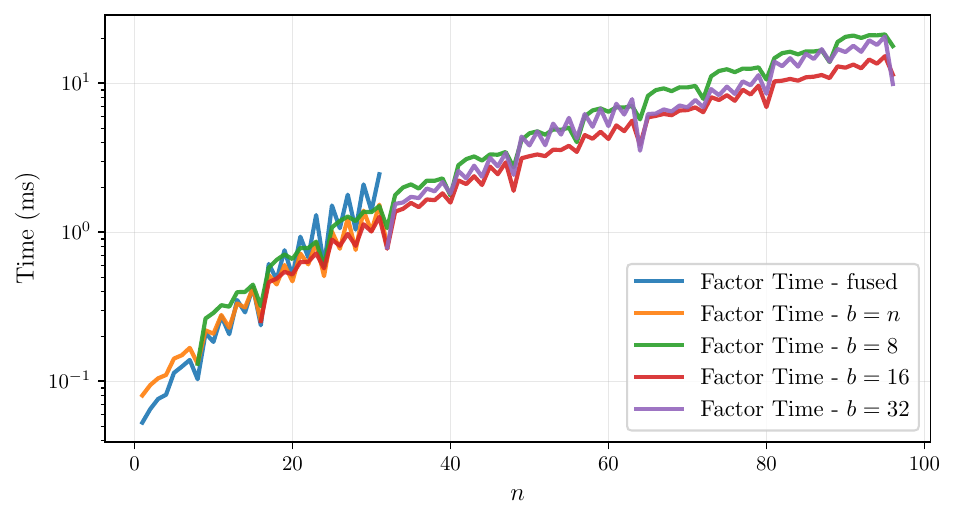}  
\caption{Computation time for different block sizes $n$ and horizon of $N=512$ on a RTX3080 using double precision (\texttt{f64}) for fused kernels and blocked kernels where $b$ is the tile size. For $n \geq 32$, the fused kernel runs out of shared memory.}
\label{fig:N512_rtx3080_block_size}
\end{figure}

Figure~\ref{fig:N512_rtx3080_block_size} shows the computation time for a horizon of $N=512$ across different block sizes $n$, comparing both the fused kernel variant and the blocked kernel variant with tile size $b$. The results use double precision (\texttt{f64}) with 64 threads per block. For small block sizes up to $n=16$, the fused kernels achieve the fastest factorization time due to lower overhead. Beyond $n=16$, the blocked variant becomes more efficient as operation-level parallelism provides speedup.

A notable quantization effect can be observed, i.e., odd block sizes are generally significantly slower. Figure~\ref{fig:N512_rtx3080_block_size_float32} shows the same experiment using single precision (\texttt{f32}), where this discretization effect manifests for block sizes divisible by four. This behavior highlights the importance of memory alignment. Matrices must be aligned to 128 bits to leverage NVIDIA GPUs' special instructions for loading 128-bit aligned memory.

Figures~\ref{fig:N512_rtx5090_block_size} and~\ref{fig:N512_rtx5090_block_size_float32} show analogous results for the RTX5090, exhibiting similar behavior. For double precision with $n$ between 16 and 32, setting $b=n$ (i.e., no tiling) is most efficient. However, as discussed earlier, tiling becomes necessary for larger block sizes to avoid exceeding shared memory limits. A tile size of $b=8$ appears too small to saturate the 64 available threads. While $b=16$ generally performs best and $b=32$ is superior when $n$ is a multiple of 32. Indeed, optimal performance is consistently achieved when $n$ is divisible by 8 for double precision and divisible by 16 for single precision. Therefore, padding the problem size accordingly ensures maximum performance

\begin{figure}[tbp]  
\centering  
\includegraphics[width=1.0\textwidth]{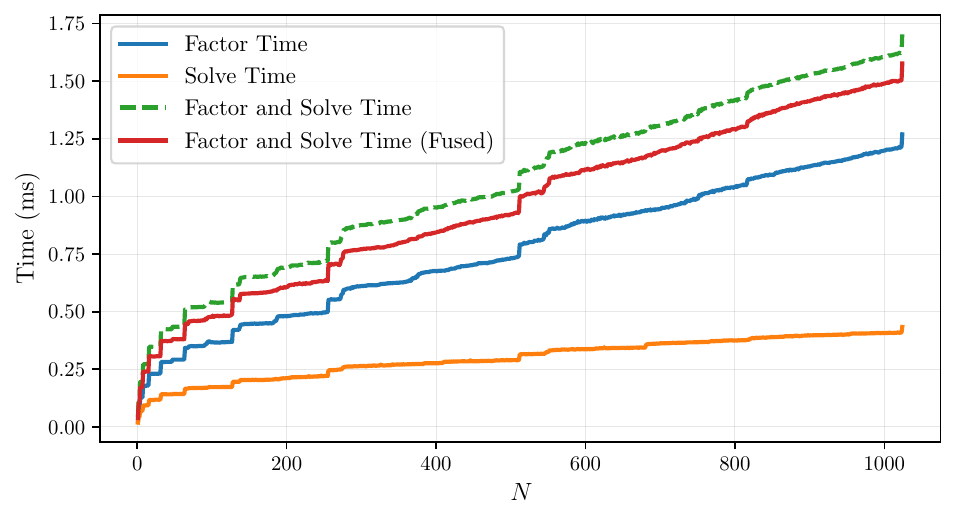}  
\caption{Computation time for different horizons $N$ with block size $n=32$ on a RTX3080 using double precision (\texttt{f64}).}
\label{fig:n32_rtx3080_times}  
\end{figure}

Figure~\ref{fig:n32_rtx3080_times} shows the computation times for different horizons $N$ with a block size of $n=32$ using the blocked kernel variant with $b=32$. Since it is common to factorize and then directly solve for a given right-hand side, we also implemented a fused variant where the factorization and forward substitution are interlaced, increasing both parallelism and performance. The interlaced kernel launches are illustrated in Figure~\ref{fig:fac_sol_timeline_both}. As shown in Figure~\ref{fig:n32_rtx3080_times}, this approach reduces the solve time significantly compared to running the factorization and solve routines sequentially.

The factorization time follows the theoretical $\mathcal{O}(\lfloor\log_2(N)\rfloor)$ complexity derived in Section~\ref{seq:mult_stage_perm} up to approximately $N=250$, but becomes more linear thereafter. A similar behavior is observed for the solve time, though this effect occurs later at around $N=700$. This departure from theoretical complexity arises because the SMs in the GPU become saturated, preventing the first iterations from being processed fully in parallel. This saturation occurs later for the solve operation because it requires less shared memory, allowing more blocks to be processed in parallel by a single SM.

\begin{figure}[tbp]  
\centering  
\includegraphics[width=1.0\textwidth]{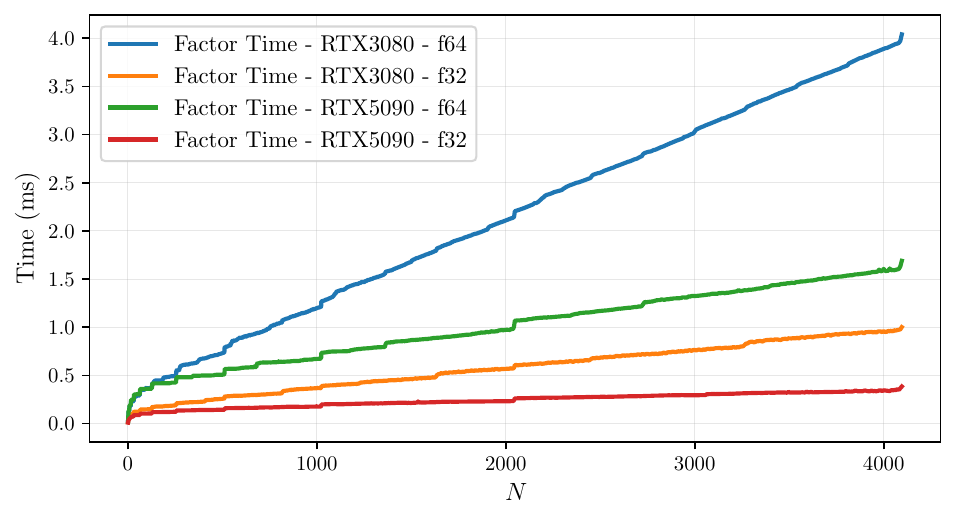}  
\caption{Computation time of the factorization for different horizons $N$ with block size $n=32$ on different GPUs and precisions.}
\label{fig:n32_rtx3080_rtx5090_factor_times}  
\end{figure}

Figure~\ref{fig:n32_rtx3080_rtx5090_factor_times} compares factorization times on the RTX3080 and RTX5090 using single (\texttt{f32}) and double (\texttt{f64}) precision. For small horizons, the factorization times are very similar between the RTX3080 and RTX5090. However, the RTX5090 considerably outperforms the RTX3080 for larger horizons $N$, achieving roughly a $2.5\times$ speedup, which aligns with its $2.5\times$ greater number of SMs as outlined in Table~\ref{tab:gpu_specs}. We also observe that single precision exhibits a $4\times$ speedup compared to double precision. This can be explained by both the lower shared memory requirements and the fact that single precision floating point operations are 1.5--2$\times$ faster than double precision floating point operations \cite{jia2019dissectingnvidiaturingt4}. A similar behavior can be seen for the solve times in Figure~\ref{fig:n32_rtx3080_rtx5090_solve_times}.

\section{Comparison to other CPU and GPU Solvers} \label{sec:numerical_examples_gpu}

\subsection{QDLDL}

\begin{figure}[tbp]  
\centering  
\includegraphics[width=1.0\textwidth]{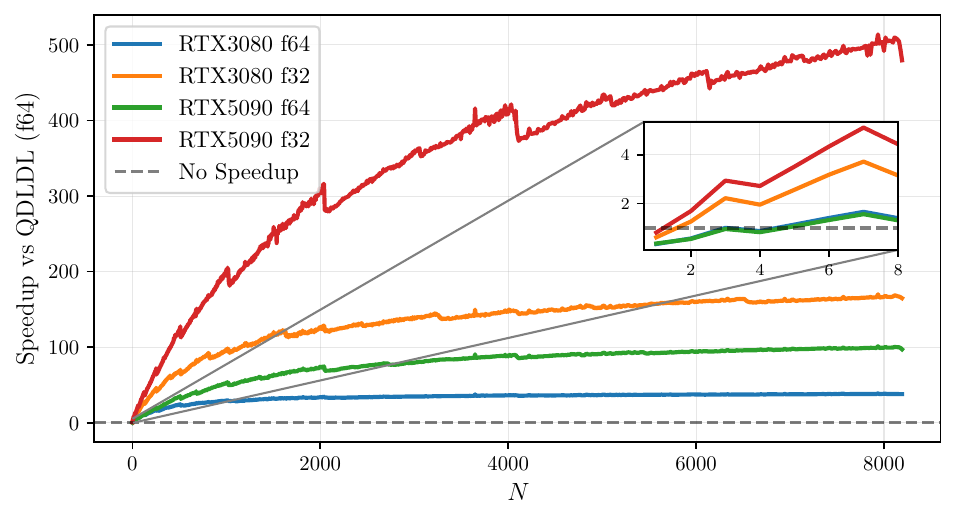}  
\caption{Speedup of the factorization time for different horizons $N$ with block size $n=32$ in respect to QDLDL with different GPUs and precisions. QDLDL only provides interfaces with double floating-point precision.}
\label{fig:n32_fac_speedup_qdldl}  
\end{figure}

In a first step, we compare against the sparse LDL solver QDLDL, which is part of the popular QP solver OSQP \cite{stellato2020}. The reported timings for QDLDL include only the numerical factorization time, excluding the symbolic factorization phase.

Figure~\ref{fig:n32_fac_speedup_qdldl} shows the speedup of the factorization routine for increasing horizon $N$ across different GPUs and precisions. Considerable speedups are observed as the horizon increases, with speedups evident even for small horizons. As the GPU saturates and computation time transitions from the $\mathcal{O}(\lfloor\log_2(N)\rfloor)$ regime to a linear one, the maximum speedup plateaus at approximately $40\times$ for the RTX3080 and $100\times$ for the RTX5090 using double precision. With single precision, maximum speedups of $170\times$ for the RTX3080 and over $500\times$ for the RTX5090 are achieved.

Figure~\ref{fig:n32_solve_speedup_qdldl} shows similar behavior for the solve times, though the speedups are less pronounced and saturation occurs at longer horizons.

\subsection{BLASFEO}

\begin{figure}[tbp]  
\centering  
\includegraphics[width=1.0\textwidth]{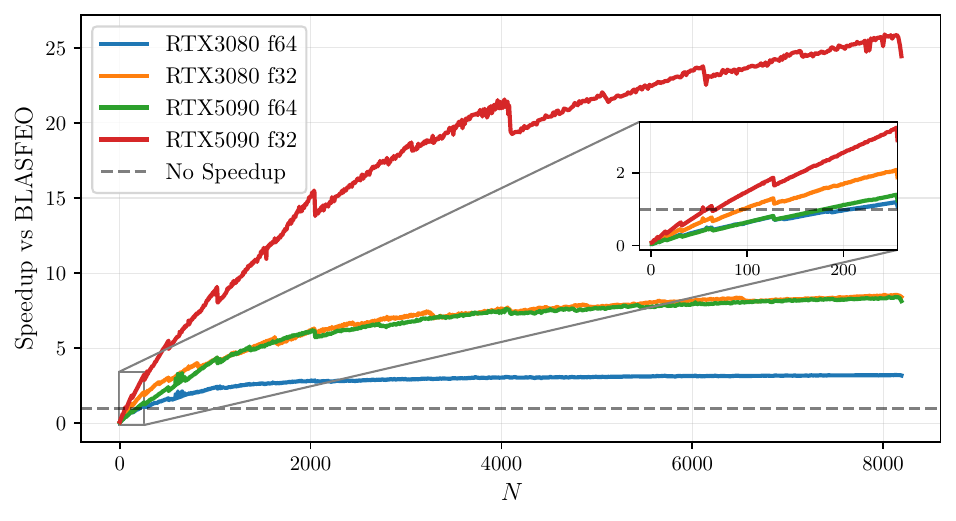}  
\caption{Speedup of the factorization time for different horizons $N$ with block size $n=32$ in respect to BLASFEO with different GPUs and precisions.}
\label{fig:n32_fac_speedup_blasfeo}  
\end{figure}

To ensure a fair comparison, we also implemented Algorithm~\ref{alg:seq_factorization} on the CPU using the BLASFEO library \cite{frison2018} as a backend, rather than comparing solely against the more general-purpose sparse LDL solver QDLDL. We created Python bindings for the experiments, passing data by reference to avoid introducing overhead.

Figure~\ref{fig:n32_fac_speedup_blasfeo} shows the speedups achieved by our GPU solver. Note that here, we compare implementations with the same floating-point precision, e.g., the \texttt{f64} GPU implementation with the \texttt{f64} BLASFEO implementation. The BLASFEO backend significantly outperforms QDLDL due to its use of AVX2 (Single Instruction, Multiple Data) SIMD instructions, which enable the CPU to process up to four doubles or eight floats simultaneously, providing limited instruction-level parallelism. Additionally, BLASFEO exhibits better data locality, leading to improved cache utilization. Since memory transfers constitute a substantial bottleneck on modern CPUs, this enhanced locality provides further speedup.

Nevertheless, our GPU implementation achieves over $25\times$ speedup with an RTX5090 using single precision. Even with double precision on an RTX3080, we still observe a $2\times$ speedup for long horizons $N$. The crossover point occurs at horizons around $N=100$ to $N=200$.

\subsection{CUDSS}

\begin{figure}[tbp]
\centering
\includegraphics[width=1.0\textwidth]{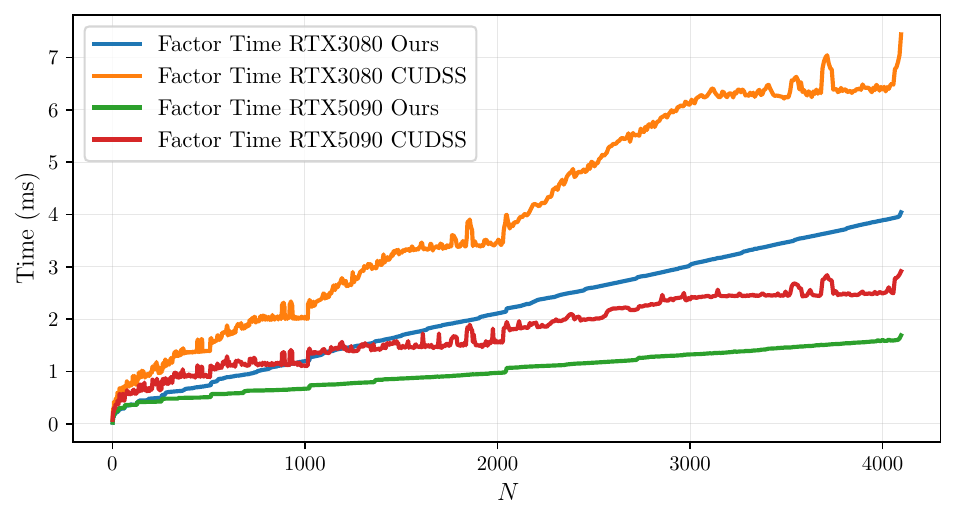}
\caption{Comparison between factorization times of our factorization method and CUDSS for different horizons $N$ with block size $n=32$ on different GPUs using double precision.}
\label{fig:n32_cudss_fact}
\end{figure}

We also compare our method to NVIDIA's closed-source CUDSS library. Figure~\ref{fig:n32_cudss_fact} and Figure~\ref{fig:n32_cudss_solve} show the factorization and solve times, respectively, on an RTX3080 and an RTX5090 using double precision. Our method consistently achieves approximately 2$\times$ speedup across both metrics.

While the exact algorithm deployed in CUDSS is not documented, we believe the permutation in their analysis phase likely employs a nested dissection method, resulting in similar asymptotic scaling to our approach. Indeed, the jumps in computation time at powers-of-two boundaries suggest that CUDSS also exploits the tree structure induced by nested dissection permutations. The performance difference primarily stems from CUDSS's inability to exploit the otherwise dense structure inherent to our problem.

Additionally, the plots do not include symbolic analysis time, which is typically two orders of magnitude slower than factorization time. This phase encompasses permutation calculation and symbolic factorization to prepare for numerical factorization. We exclude this overhead from our comparison because the matrix structure is known a priori, allowing the symbolic analysis to be performed offline.

\section{Conclusion} \label{sec:conclusion}

We have presented a comprehensive framework for GPU-accelerated Cholesky factorization of block tridiagonal matrices through systematic matrix reordering strategies. Our multi-stage permutation approach, similar to nested dissection, reduces computational complexity from $\mathcal{O}(Nn^3)$ to $\mathcal{O}(\log_2(N)n^3)$ when sufficient parallel resources are available. By exploiting both column-level and operation-level parallelism through CUDA streams and atomic operations, we further reduced the critical path length and improved practical performance.

The numerical implementation demonstrated substantial speedups on modern NVIDIA GPUs. Compared to the sparse LDL solver QDLDL, our approach achieved speedups exceeding $100\times$ on an RTX 3080 and over $500\times$ on an RTX 5090 using single precision for long horizons. Even against the highly optimized CPU implementation tailored for block tridiagonal systems using BLASFEO with SIMD instructions, our GPU solver maintained speedups of $25\times$ to $40\times$ for single precision, with crossover points occurring around $N=100$ to $N=200$. Notably, our method also outperforms NVIDIA's closed-source CUDSS library by approximately $2\times$ for both factorization and solve operations. This performance advantage stems from our algorithm's ability to exploit the dense block structure inherent to block tridiagonal matrices, which general-purpose sparse solvers cannot leverage as effectively.

Our approach is particularly well-suited for applications in model predictive control, Kalman filtering, and other domains where block tridiagonal systems with moderately long horizons arise frequently. This also includes the integration into optimization solver for applications in robotics and power systems. The logarithmic scaling ensures that performance continues to improve as horizon length increases, making it an attractive solution for real-time applications requiring repeated solution of large structured linear systems. Future work could explore more aggressive kernel fusion, extension to block banded matrices with larger bandwidth, mixed-precision strategies to further enhance performance.

\ifisThesis\else
\bibliographystyle{IEEEtran}
\bibliography{tail/refs}
\fi

\newpage
\begin{subappendices}

\begin{figure}[tbp]  
\centering  
\includegraphics[width=1.0\textwidth]{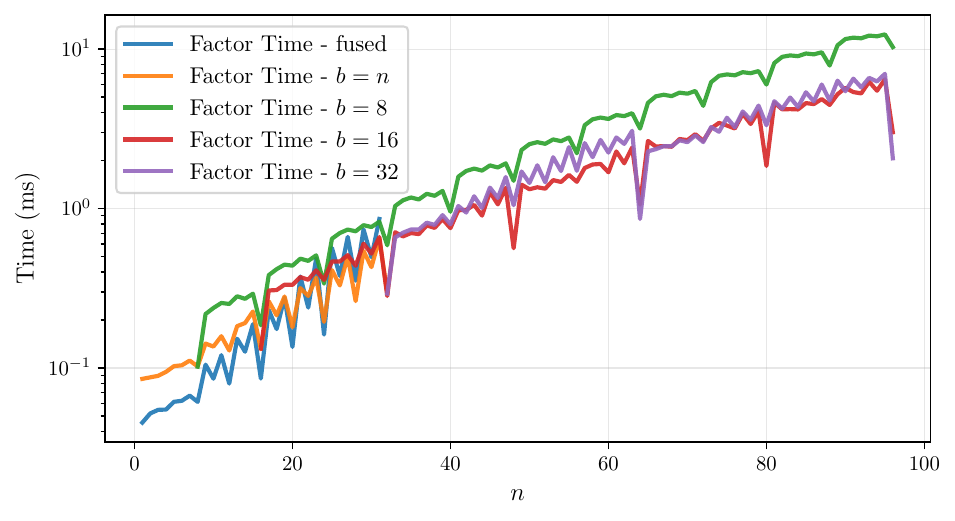}  
\caption{Computation time for different block sizes $n$ and horizon of $N=512$ on a RTX3080 using double precision (\texttt{f32}) for fused kernels and blocked kernels where $b$ is the tile size.}
\label{fig:N512_rtx3080_block_size_float32}  
\end{figure}

\begin{figure}[tbp]  
\centering  
\includegraphics[width=1.0\textwidth]{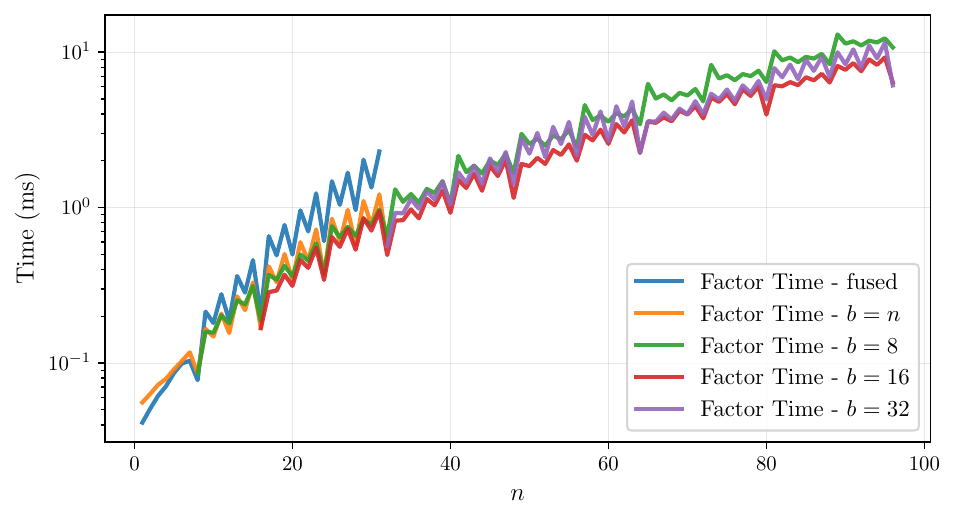}  
\caption{Computation time for different block sizes $n$ and horizon of $N=512$ on a RTX5090 using double precision (\texttt{f64}) for fused kernels and blocked kernels where $b$ is the tile size.}
\label{fig:N512_rtx5090_block_size}  
\end{figure}

\begin{figure}[tbp]  
\centering  
\includegraphics[width=1.0\textwidth]{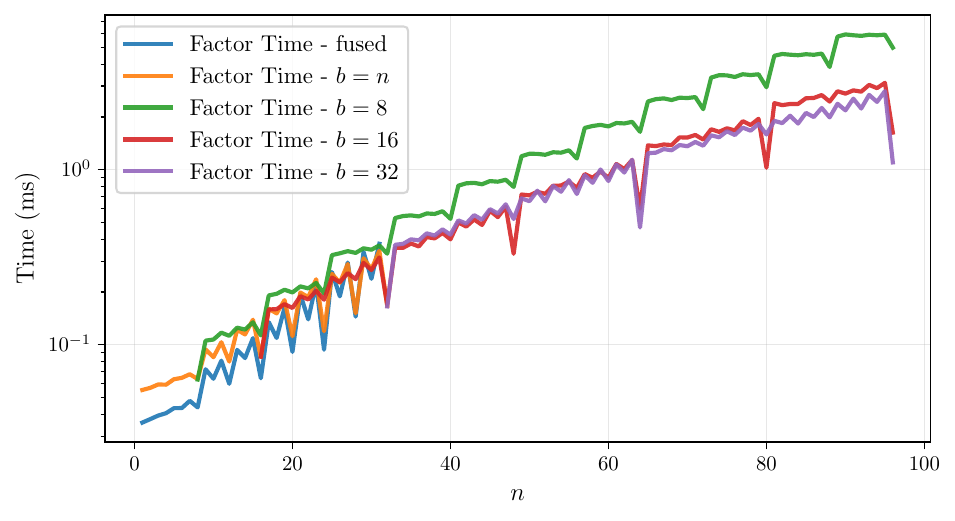}  
\caption{Computation time for different block sizes $n$ and horizon of $N=512$ on a RTX5090 using double precision (\texttt{f32}) for fused kernels and blocked kernels where $b$ is the tile size.}
\label{fig:N512_rtx5090_block_size_float32}  
\end{figure}

\begin{figure}[tbp]  
\centering  
\includegraphics[width=1.0\textwidth]{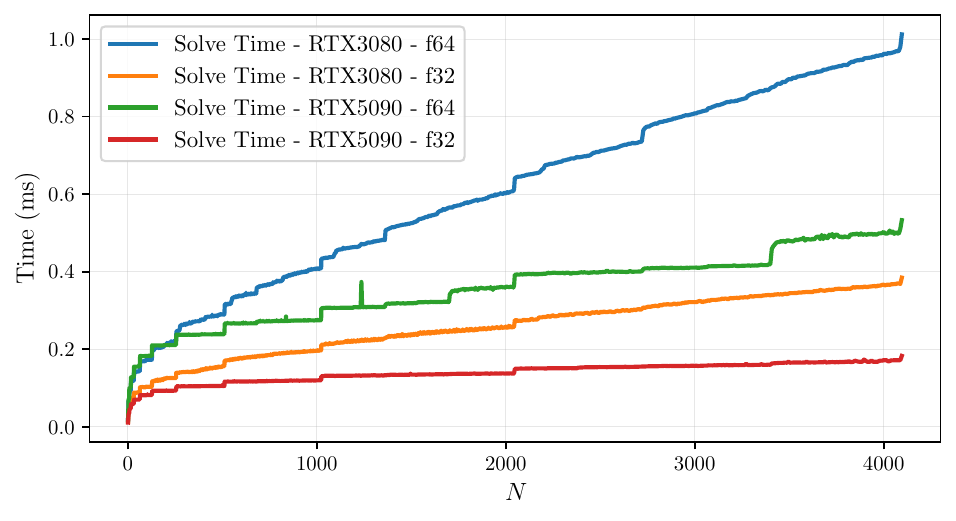}  
\caption{Computation time of the solve for different horizons $N$ with block size $n=32$ on different GPUs and precisions.}
\label{fig:n32_rtx3080_rtx5090_solve_times}  
\end{figure}

\begin{figure}[tbp]  
\centering  
\includegraphics[width=1.0\textwidth]{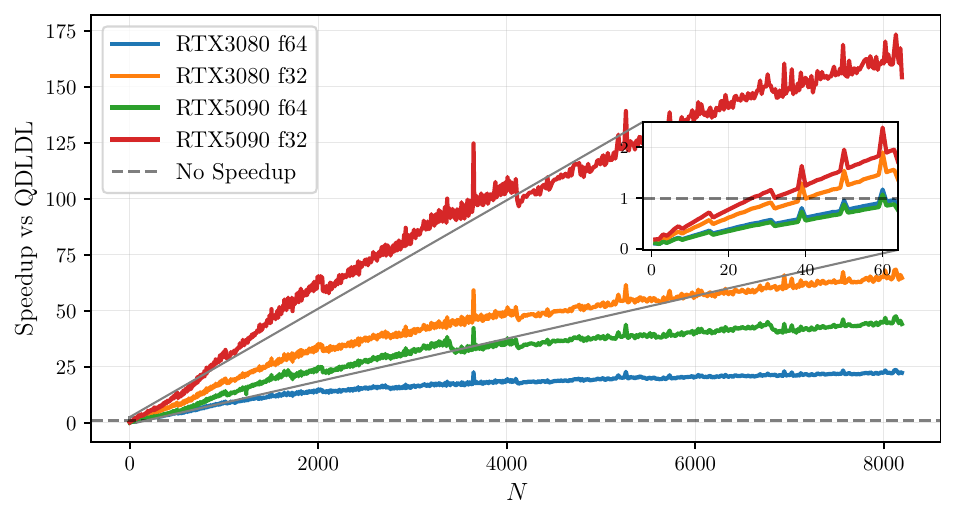}  
\caption{Speedup of the solve time for different horizons $N$ with block size $n=32$ in respect to QDLDL with different GPUs and precisions.}
\label{fig:n32_solve_speedup_qdldl}  
\end{figure}

\begin{figure}[tbp]
\centering
\includegraphics[width=1.0\textwidth]{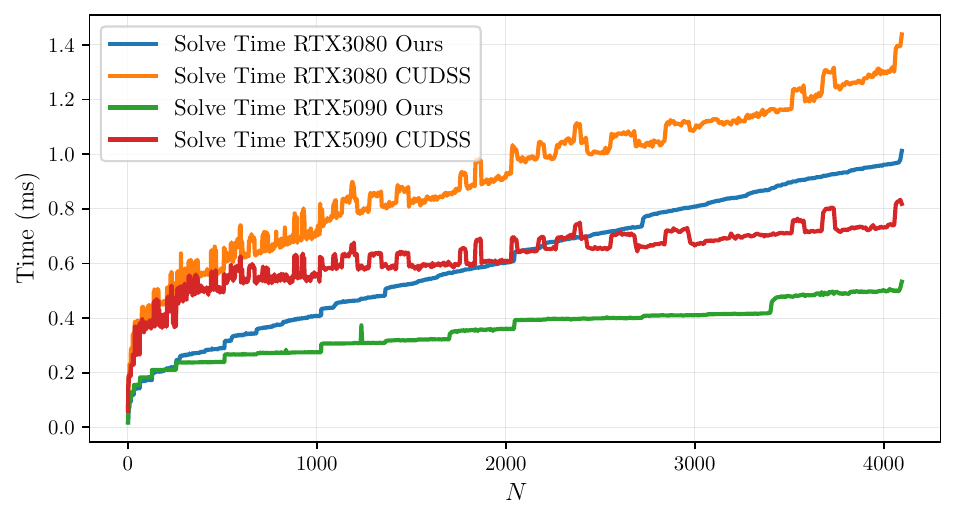}
\caption{Comparison between solve times of our factorization method and CUDSS on different GPUs using double precision.}
\label{fig:n32_cudss_solve}
\end{figure}

\end{subappendices}

\end{document}